\definecolor{mygreen}{HTML}{43a047}
\newcommand{\Om}{\Omega}
\newcommand{\rhob}{\rho_{\textup{b}}}
\newcommand{\rhoa}{\rho_{\textup{a}}}
\newcommand{\Ca}{C_{\textup{a}}}
\newcommand{\Cb}{C_{\textup{b}}}
\newcommand{\kappaa}{\kappa_{\textup{a}}}
\newcommand{\Thetaa}{\Theta_{\textup{a}}}
\newcommand{\ddt}{\frac{\textup{d}}{\textup{d}t}}
\newcommand{\ds}{\, \textup{d} s }
\newcommand{\dx}{\, \textup{d} x}
\newcommand{\intO}{\int_{\Omega}}
\newcommand{\R}{\mathbb{R}} 
\newtheorem{theorem}{Theorem}
\newtheorem{lemma}{Lemma}
\newtheorem{proposition}{Proposition}
\newtheorem{assumption}{Assumption}
\numberwithin{lemma}{section}
\numberwithin{proposition}{section}
\numberwithin{theorem}{section}
\numberwithin{equation}{section}
\newcommand{\leqnomode}{\tagsleft@true}
\newcommand{\reqnomode}{\tagsleft@false}
\definecolor{grey}{rgb}{0.5,0.5,0.5}
\title[Westervelt--hyperbolic Pennes system]{Global existence and asymptotic behavior of the Westervelt--hyperbolic Pennes system}      
\subjclass[2010]{35L70, 35K05}      
\keywords{ultrasonic heating, Westervelt's equation,   nonlinear acoustics, Pennes bioheat equation, Cannateo model,  HIFU}  
\author[I. Benabbas]{Imen Benabbas$^\dagger$}
\thanks{$^\dagger$AMNEDP Laboratory, Faculty of Mathematics,
	USTHB (\href{ibenabbas@usthb.dz}{ibenabbas@usthb.dz})}
\author[B. Said-Houari]{Belkacem Said-Houari$^\ddag$}
\thanks{$^\ddag$Department of Mathematics, College of Sciences, University of
	Sharjah, P. O. Box: 27272, Sharjah, United Arab Emirates    (\href{bhouari@sharjah.ac.ae}{bhouari@sharjah.ac.ae})}
\begin{document}
	\vspace*{8mm}
	\begin{abstract}    
In this work, we investigate the global existence and asymptotic behavior of a mathematical model of nonlinear ultrasonic heating based on a coupled system of the Westervelt equation and the hyperbolic Pennes bioheat equation (Westervelt--Pennes--Cattaneo model). First, we prove that the solution exists globally in time, provided that    the lower-order Sobolev norms of the initial data are considered to be small, while the higher-order norms can be arbitrarily large. This is done using a continuity argument together with some interpolation inequalities. Second, we prove an exponential decay of the solution under the same smallness assumptions on the initial data. 
					\end{abstract}   
	\vspace*{-7mm}  
	\maketitle                  
     
\section{Introduction} 

\subsection{The model}
This paper is concerned with the global existence and asymptotic behavior of solutions to the Westervelt--Pennes--Cattaneo system subject to Dirichlet boundary conditions for both temperature and pressure. More precisely, we consider the system   
\begin{equation} 
\left\{ \label{coupled_problem_eq_Cattaneo}
\begin{aligned}
&p_{tt}-c^2(\bar{\Theta})\Delta p - b \Delta p_t = K(\bar{\Theta})\left(p^2\right)_{tt}, \qquad &\text{in} \ \Omega \times (0,T),\\
&\rhoa \Ca\bar{\Theta}_t +\nabla\cdot q + \rhob \Cb W(\bar{\Theta}-\Thetaa) = \mathcal{Q}(p_t), \qquad &\text{in} \ \Omega \times (0,T),
\\  
&\tau q_t+q+\kappaa \nabla \bar{\Theta}=0, \qquad &\text{in} \ \Omega \times (0,T).
\end{aligned}
\right.
\end{equation}
Here $p$ and $\bar{\Theta}$ denotes respectively, the acoustic pressure and the temperature fluctuations. The thermal parameters $\rhoa, \Ca$ and $\kappaa$ are, respectively,  the ambient density, the ambient heat capacity and thermal conductivity of the tissue. The additional term $\rhob \Cb W(\bar{\Theta}-\Thetaa)$ represents the heat loss caused by blood circulation, with  $\rhob, \Cb$ being the density and specific heat capacity of the blood, and $W$ expressing the tissue's volumetric perfusion rate measured in milliliters of blood per milliliter of tissue per second. We denote by $\Thetaa$ the ambient temperature, that is typically taken in the human body to be $37^\circ C$; see \cite{connor2002bio}. The diffusivity of sound is $b$ and $c$ is the speed of sound, which we assume to depend on the temperature $\bar{\Theta}$.  The function $K$ is given by $K(\bar{\Theta})=\beta/(\rho c^2(\bar{\Theta }))$, 
where $\rho$ is the mass density and $\beta $ is the parameter of nonlinearity.

We complement \eqref{coupled_problem_eq_Cattaneo} with the initial conditions 
\begin{equation}\label{Initial_Condi}
p|_{t=0}=p_0,\quad p_t|_{t=0}=p_1,\quad \bar{\Theta}|_{t=0}=\bar{\Theta}_0,\quad q|_{t=0}=q_0
\end{equation}
and Dirichlet-Dirichlet boundary conditions  
\begin{equation} \label{coupled_problem_BC}
p\vert_{\partial \Om}=0, \qquad \bar{\Theta}\vert_{\partial \Om}= \Thetaa.  
\end{equation}

When $c$ and $K$ are constant, the first equation in \eqref{coupled_problem_eq_Cattaneo} reduces to the well-known Westervelt equation, which is a classical model in nonlinear acoustics and was originally derived in \cite{westervelt1963parametric}. It is frequently used for describing the propagation of high-intensity focused ultrasound (HIFU) in thermo-viscous media \cite{Taraldsen, connor2002bio, Doinikov2014SimulationsAM}.
 
%{\color{red}In particular, we are interested in the equations governing the dynamics of localized heating of tissue using ultrasound waves, a technique that is finding a lot of success in treating benign and malignant tumors********.}

The second and third equations in \eqref{coupled_problem_eq_Cattaneo} describe   the evolution of the temperature $\bar{\Theta}$   where the heating source is the acoustic energy absorbed by the tissue represented here by the function $\mathcal{Q}$, which we assume,  to have the form 
\begin{equation} \label{Q_definition}
\mathcal{Q}(p_t)=\frac{2b}{\rhoa c_{a}^4} (p_t)^2
\end{equation}
where  $c_{a}$ is the ambient speed \cite{connor2002bio}. Indeed, together these two equations constitute the hyperbolic version of the Pennes' equation \cite{pennes1948analysis}
(see also \cite[Eq.\ 3]{kabiri2021analysis} and \cite[Eq. 7]{xu2008non})
\begin{equation} \label{Hyperbolic_Pennes}
\begin{aligned}
&\tau \rhoa \Ca \bar{\Theta}_{tt}+(\rhoa \Ca +\tau \rhob \Cb W) \bar{\Theta}_t +\rhob \Cb W (\bar{\Theta}-\Thetaa)-\kappaa \Delta \bar{\Theta} \\
=&\,\mathcal{Q}(p_t)+ \tau \partial_t\mathcal{Q}(p_t),
\end{aligned}
\end{equation}
which we employ here to model heat transfer in biological systems. Its hyperbolic nature comes from adopting the Cattaneo law of heat conduction \cite{Ca48}:  
 \begin{equation}\label{Cattaneo}
\tau q_t+q+\kappaa \nabla \bar{\Theta}=0
\end{equation}
  which differs from the classical Fourier law: 
 \begin{equation}\label{Fourier_Law}
q+\kappaa \nabla \bar{\Theta}=0 
\end{equation}
by the presence of the relaxation term $\tau q_t$, where $\tau $ is the relaxation time parameter. The Cattaneo law was introduced to overcome the paradox of the infinite speed of propagation of thermal signals in the Fourier law; namely equation \eqref{Fourier_Law} implies that any temperature disturbance causes an instantaneous perturbation in the temperature at each point in the medium, which does not accurately represent the physical reality, for instance in situations that involve short heating periods; see \cite{xu2008non},  \cite{kabiri2021analysis}. 

 An important feature of the current model \eqref{coupled_problem_eq_Cattaneo} is the dependence of the speed of sound $c$ on the temperature $\bar{\Theta}$ \cite{connor2002bio, hallaj2001simulations}. 
This connection holds practical significance in various applications. Experimental studies on HIFU therapy have indicated 
  that the temperature elevation in the tissue, induced by the ultrasonic energy, will lead to changes in the physical properties of the tissue, which in turn affect the acoustic and the temperature fields \cite{Choi2011ChangesIU}.

 %%%%%%%%%%%%%%%%%%%%%%%%%%%%%
  \subsection{Related literature} 
The Westervelt equation has received considerable attention in recent years and  significant progress has been made recently toward the understanding of its  solutions  and their behavior; see \cite{nikolic2015local, Clason2013AvoidingDI, Kaltenbacher2015MathematicsON, meyer2011optimal, kaltenbacher2019Well, Simonett2016WellposednessAL,kaltenbacher2009global} and the references therein. 
%Even though the functional setting and the boundary conditions may vary in these works, it remains that they all treat the case when the sound speed $c$, the diffusivity of sound $b$ and the parameter $K$ are set to be constants. 
In \cite{kaltenbacher2009global},  the authors proved, under homogeneous Dirichlet boundary conditions, that a unique  solution exists globally in time and decays exponentially fast. This was achieved by exploiting the strong damping expressed by the term $-b \Delta p_t, \, b>0$.  In addition,  the initial data were assumed to be small, a constraint that is typically required for  the solutions to nonlinear acoustics equations, even for local well-posedness, to avoid potential degeneracy. Similar well-posedness results for the Westervelt equation subject to other types of boundary conditions, such as Neumann and absorbing boundary conditions, were provided in the papers  \cite{KALTENBACHER20191595, BarbaraKaltenbacher2011, Simonett2016WellposednessAL, nikolic2015local}. We mention also the work \cite{KaltenbacherParabolicApproximation} where the authors addressed the behavior of the solutions of the Westervelt equation when the  sound diffusivity $b\rightarrow 0^+$. They proved that the solution converges at a linear rate, as $b$ goes to zero, to the solution of the wave equation, corresponding to $b=0$.

 For $\tau=0$, system \eqref{coupled_problem_eq_Cattaneo} reduces to the Westervelt--Pennes--Fourier system:
 \begin{equation}\label{Westervelt--Pennes--Fourier}
  \left\{
  \begin{aligned}
	&p_{tt}-c^2(\bar{\Theta})\Delta p - b \Delta p_t = K(\bar{\Theta}) \left(p^2\right)_{tt},\\  
	&\rhoa \Ca\bar{\Theta}_t -\kappaa\Delta \bar{\Theta}+ \rhob \Cb W(\bar{\Theta}-\Thetaa) = \mathcal{Q}(p_t), 
\end{aligned}  
\right.	
\end{equation}
where the second equation is the 
 the parabolic Pennes equation, that is widely used for studying heat transfer in biological systems as it accounts  for both conduction-based heat transfer in tissues and convective heat transfer due to blood perfusion     ~\cite{pennes1948analysis}. 
 
%We complement \eqref{coupled_problem_eq_Cattaneo} with the initial conditions 
%\begin{equation}\label{Initial_Condi}
%p|_{t=0}=p_0,\quad p_t|_{t=0}=p_1,\quad \bar{\Theta}|_{t=0}=\bar{\Theta}_0,\quad q|_{t=0}=q_0
%\end{equation}
%and Dirichlet boundary conditions  
%\begin{equation} \label{coupled_problem_BC}
%p\vert_{\partial \Om}=0, \qquad \bar{\Theta}\vert_{\partial \Om}= \Thetaa.  
%\end{equation}

To the best of the authors' knowledge, it seems that the first mathematical study of \eqref{Westervelt--Pennes--Fourier} is the one presented in \cite{Nikolic_2022} where Nikoli\'c and Said-Houari considered \eqref{Westervelt--Pennes--Fourier} with  Dirichlet--Dirichlet boundary conditions, and proved local well-posedness using the energy method together with a fixed point argument.  The work in \cite{Nikolic_2022} was followed by   \cite{NIKOLIC2022628}, where under a smallness assumption on the initial data, the authors established the existence of a global-in-time solution of \eqref{Westervelt--Pennes--Fourier} and proved an exponential decay of the solution. Using the maximal regularity estimate for parabolic systems, Wilke in \cite{Wilke_2023} improved  slightly the regularity assumptions in \cite{Nikolic_2022}.
  
 Recently in \cite{Benabbas_Said_Houar_2023} we investigated \eqref{coupled_problem_eq_Cattaneo} and by employing the energy method together with a fixed point argument, we established the local well-posedness of \eqref{coupled_problem_eq_Cattaneo}. In addition, we showed that \eqref{coupled_problem_eq_Cattaneo} does not degenerate under a smallness assumption on the pressure data in the Westervelt equation.  Furthermore,  we performed a singular limit analysis and proved that the Westervelt--Pennes--Fourier model \eqref{Westervelt--Pennes--Fourier} can 
be seen as an approximation of the Westervelt--Pennes--Cattaneo model \eqref{coupled_problem_eq_Cattaneo}, when the relaxation parameter $\tau$ tends to zero.

\subsection{Main contributions}
The main goal of this work is to study the global existence and asymptotic behavior of the solution to the Westervelt--Pennes--Cattaneo model \eqref{coupled_problem_eq_Cattaneo}. First, using the energy method together with the bootstrap argument,  we show that the solution is global in time, provided that the initial data are small enough. We emphasize here that we assume smallness only on the lower-order  Sobolev norms of the initial data, while the higher-order norms can be arbitrarily large. Second, under the same smallness assumption on the initial data, and by using a Gronwall's type inequality,  we prove that the solution decays exponentially fast to the steady state.       
  \subsection{Outline of the presentation} The rest of this paper is structured as follows: In  Section~\ref{sect2}, we reformulate our problem into a new system that is more convenient for the energy analysis.   We also introduce the necessary theoretical preliminaries and the main assumptions used in the proofs. In  Section \ref{Section_Main_Result}, we state   the main results and give some comments about them.      
 Section~\ref{Section_3} is dedicated to the energy analysis. Achieving the control of the different energies involves delicate estimates, particularly as we aim to assume smallness solely on the lower-order energy. To this end, we apply some interpolation inequalities to facilitate the extraction of these lower-order norms in the nonlinear estimates. In Section \ref{Sec_Global Existence}, we carry out the proofs of the global wellposedness and decay rate of solutions to \eqref{coupled_problem_eq_Cattaneo} by putting together the energy estimates established in Section ~\ref{Section_3}.

 \section{Statement of the problem and main results} \label{sect2}     
To state and prove our result and to lighten the notation, we put 
\begin{equation}
m=\rhoa \Ca\qquad \text{and}\qquad \ell=\rhob \Cb W. 
\end{equation} 
We combine the second and third equations in \eqref{coupled_problem_eq_Cattaneo} to get the equation \eqref{Hyperbolic_Pennes}, then we make the change of variables 
$\Theta=\bar{\Theta}-\Theta_a$
in the temperature so that now the bioheat equation becomes
\begin{equation} \label{bioheat_eq}
\tau m\Theta_{tt} +(m +\tau \ell) \Theta_t + \ell \Theta -\kappaa \Delta \Theta = \mathcal{Q}(p_t) + \tau \partial_t \mathcal{Q}(p_t), \qquad  \text{in} \ \Omega \times (0,T),
\end{equation}
and denoting by   
 \begin{equation}\label{funct_k}
k (\Theta)=K(\Theta+\Thetaa)=\frac{\beta}{\rho c^2(\Theta+\Thetaa)} \qquad \text{and}\qquad  h (\Theta)=c^2(\Theta+\Thetaa),  
\end{equation}
we recast the pressure equation in \eqref{coupled_problem_eq_Cattaneo} as 
\begin{equation} \label{pressure_eq}
(1- 2k (\Theta) p)p_{tt}-h (0)\Delta p - b \Delta p_t = 2k (\Theta) (p_{t})^2+\tilde{h} (\Theta)\Delta p, \qquad \text{in} \ \Omega \times (0,T)
\end{equation}
where we have decomposed $h (\Theta)$ as follows
\begin{equation}
h (\Theta)=h(0)+ \tilde{h} (\Theta)  \quad   \text{with} \quad  \tilde{h} (\Theta)=\int_0^\Theta h'(s) \ds, \quad h(0)=c^2(\Thetaa).
\end{equation}
So henceforth, we consider system \eqref{coupled_problem_eq_Cattaneo} in the following form  
\begin{subequations}\label{Main_system} 
\begin{equation}
\left\{ \label{modified_temp_eq}
\begin{aligned}
&(1- 2k (\Theta) p)p_{tt}-h (0)\Delta p - b \Delta p_t = 2k (\Theta) (p_{t})^2+\tilde{h} (\Theta)\Delta p, \quad &\text{in} \ \Omega \times (0,T),\\
&\tau m\Theta_{tt} +(m +\tau \ell) \Theta_t + \ell \Theta -\kappaa \Delta \Theta = \mathcal{Q}(p_t) + \tau \partial_t \mathcal{Q}(p_t), \quad &\text{in} \ \Omega \times (0,T).
\end{aligned}
\right.
\end{equation}   
Note that the purpose of the shift in the temperature variable is that now we have homogeneous boundary conditions for both the pressure and the temperature
\begin{eqnarray} \label{homog_dirichlet}
p\vert_{\partial \Om}=0, \qquad \Theta\vert_{\partial \Om}=0
\end{eqnarray} 
and the initial conditions are given by
\begin{eqnarray} \label{init_cond}
p|_{t=0}=p_0,\quad p_t|_{t=0}=p_1,\quad \Theta|_{t=0}=\Theta_0:=\bar{\Theta}_0-\Thetaa,\quad \Theta_t|_{t=0}= \Theta_1.
\end{eqnarray}
\end{subequations}
The function $\mathcal{Q}$ is given by \eqref{Q_definition}; however our proofs also work for quite general $\mathcal{Q}(p_t)$ satisfying  Assumption 2 in \cite{Nikolic_2022}. Further, the dependence of the acoustic parameters $c$ and $K$ on the temperature is assumed to be polynomial, in agreement with what is typically considered in the literature, see \cite{bilaniuk1993speed}.

\textbf{Notation.} Throughout the paper, we assume that $\Omega \subset \R^d$, where $d \in \{1,2,3\}$, is a bounded, smooth domain with a $C^3$ boundary. We denote by $T>0$ the final propagation time. The letter $C$ denotes a generic positive constant
that does not depend on time, and can have different values on different occasions.  
We often write $f \lesssim g$ where there exists a constant $C>0$, independent of parameters of interest such that $f\leq C g$. 
%We note that dependance of a constant on the medium parameters is not always made explicit; however, we make sure to emphasize dependance on the relaxation time $\tau$. 
We often omit the spatial and temporal domain when writing norms; for example, $\|\cdot\|_{L^p L^q}$ denotes the norm in $L^p(0,T; L^q(\Omega))$.

To formulate our results, we require the following assumptions on the medium parameters $h$ and $k$.

\begin{assumption} \label{Assumption1}
We assume  that $h \in C^2(\mathbb{R})$ and  there exists $h_1>0$ such that
\begin{subequations}
\begin{equation} \label{bound_h}
h (s) \geq h_1, \quad \forall s \in \mathbb{R}.\tag{H1}
\end{equation}
Moreover, assume that there exist $\gamma_1 >0$ and $C>0$, such that
\begin{equation} \label{h''_assump}
\vert h ''(s) \vert \leq C (1+\vert s \vert^{\gamma_1}), \quad \forall s \in \mathbb{R}.\tag{H2}
\end{equation} 
Using Taylor's formula, we also have
\begin{equation} \label{h'_assump}
\vert h '(s) \vert \leq C (1+\vert s \vert^{1+\gamma_1}), \quad \forall s \in \mathbb{R}.\tag{H3}
\end{equation}
\end{subequations}
Since the function $k $ is related to the speed of sound by the formula \eqref{funct_k}, it follows that
\begin{subequations}
\begin{equation}\label{k_1}
\vert k (s) \vert \leq k_1:=\frac{\beta }{\rho h_1}.\tag{K1}
\end{equation}
Further, we have 
\begin{equation}
\begin{aligned}
%\vert k '(s) \vert &\lesssim k_1^2\vert h '(s) \vert \lesssim k_1^2 (1+\vert s \vert^{1+\gamma_1}),\\
\vert k ''(s) \vert &\lesssim k_1^2 \vert h ''(s) \vert+k_1^3 \vert h '(s) \vert^2 \lesssim  k_1^2(1+\vert s \vert^{\gamma_1})+k_1^3(1+\vert s \vert^{1+\gamma_1})^2,
\end{aligned}
\end{equation}
which by using Taylor's formula, implies that there exists $\gamma_2>0$, such that
\begin{equation} \label{properties_k}
\vert k '(s) \vert \lesssim (1+\vert s \vert^{1+\gamma_2}), \qquad \vert k ''(s) \vert \lesssim (1+\vert s \vert^{\gamma_2}).\tag{K2}
\end{equation}
\end{subequations}
\end{assumption}
We also assume the initial data in \eqref{init_cond} to fulfill the following regularity and compatibility conditions.
\begin{assumption} \label{Assumption_compatibility} 
 Let the initial data satisfy
 \begin{equation}   
\begin{aligned}
(p_0, p_1) & \in \big[H^3(\Om)\cap H^1_0(\Om)\big]\times \big[H^3(\Om) \cap H^1_0(\Om)\big],\\  
(\Theta_0, \Theta_1) & \in \big[H^2(\Om) \cap H^1_0(\Om)\big] \times H^1_0(\Om),
\end{aligned}
\end{equation}
such that $1-2k(\Theta_0) p_0$ does not degenerate. We also assume  the compatibility conditions:
\begin{equation}
p_2 \in H^1_0(\Om), \quad \Theta_2 \in L^2(\Om) 
\end{equation}
where $p_2:= \partial_t^2 p(0,x), \Theta_2:=\partial_t^2 \Theta(0,x), x \in \Omega, k=1, 2$ are defined formally and recursively in terms of $p_0, p_1, \Theta_0, \Theta_1$ from the equations \eqref{modified_temp_eq} as follows
\begin{equation}
\begin{aligned}
&(1-2k(\Theta_0) p_0)p_{tt}(0)=h(\Theta_0) \Delta p_0+b \Delta p_1 +k(\Theta_0)p_1^2; \\
& \tau m \Theta_{tt}(0)= -(m +\tau \ell) \Theta_1- \ell \Theta_0 + \kappaa \Delta \Theta_0+\frac{2b}{\rhoa c_{a}^4} (p_1)^2+\frac{4b}{\rhoa c_{a}^4} p_1 p_2.
\end{aligned}
\end{equation}
%See \cite{lasiecka2019long}.
\end{assumption}
\subsection{Local well-posedness of the Westervelt-Pennes-Cattaneo system}
We recall here a result on the existence of local-in-time solutions to system \eqref{Main_system}, which was proven in \cite{Benabbas_Said_Houar_2023}. Let $X_p, X_\Theta$ denote the following spaces of solutions 
% \begin{equation}
%\mathcal{X}:= X_p \times X_\Theta \times X_q,
%\end{equation}
\begin{equation}\label{Functional_Spaces}
\begin{aligned}
X_p=&\,\Big\{p \in L^{\infty}(0, T; H^3( \Om) \cap H^1_0(\Om)), \\
&  \quad p_t \in L^{\infty}(0, T; H^2( \Om) \cap H^1_0(\Om))\cap L^{2}(0, T; H^3( \Om) \cap H^1_0(\Om)),\\
& \quad p_{tt} \in L^{\infty}(0, T; H^1_0(\Om))\cap L^{2}(0, T; H^2( \Om) \cap H^1_0(\Om)),\\
& \quad p_{ttt} \in L^{2}(0, T; L^2(\Om)) \Big\};\\
X_\Theta=&\, \{ \Theta \in L^\infty (0, T;  H^2(\Om) \cap H^1_0(\Om)), \Theta_t \in L^\infty (0, T; H^1_0(\Om)),\\
& \quad \Theta_{tt} \in L^\infty(0, T; L^2(\Om))\}.
%X_q=&\, \{ q \in L^\infty(0, T; (H^1(\Om))^d); q_t, q_{tt} \in L^2(0, T; (L^2(\Om))^d)\}.
\end{aligned}
\end{equation}
Then, under Assumptions \ref{Assumption1} and  \ref{Assumption_compatibility}, we have local well-posedness in $X_p \times X_\Theta$, which is uniform with respect to the relaxation parameter $\tau$, see \cite{Benabbas_Said_Houar_2023}.     
 \begin{theorem} \label{wellposedness_thm}
Let $T>0$ and $\tau>0$ be a fixed small constant. Let Assumptions \ref{Assumption1}, \ref{Assumption_compatibility} hold and assume that 
\begin{equation}
\|p_0\|_{H^3}+\|p_1\|_{H^2}+\|p_{2}\|_{H^1}\leq R_1.
\end{equation}
Then, there exists $\delta_1=\delta_1(T, R_1)>0$, small enough, such that if  
\begin{equation}
\|p_0\|_{H^2}+\|p_1\|_{H^1}+\|p_{2}\|_{L^2}\leq \delta,
\end{equation}
then system \eqref{Main_system} has a unique solution $(p, \Theta) \in X_p \times X_\Theta$. 
\end{theorem}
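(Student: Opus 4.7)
The plan is to construct the solution by a Banach fixed-point argument applied to a linearized version of \eqref{Main_system}. For any pair $(\hat p,\hat\Theta)$ in a closed ball $\mathcal{B}_R\subset X_p\times X_\Theta$ attaining the prescribed initial and boundary data, I define $(p,\Theta)=\mathcal{T}(\hat p,\hat\Theta)$ as the solution of the decoupled linear system
\begin{equation*}
\left\{
\begin{aligned}
&(1-2k(\hat\Theta)\hat p)\,p_{tt}-h(0)\Delta p-b\Delta p_t=2k(\hat\Theta)\hat p_t^{\,2}+\tilde h(\hat\Theta)\Delta\hat p,\\
&\tau m\,\Theta_{tt}+(m+\tau\ell)\Theta_t+\ell\Theta-\kappaa\Delta\Theta=\mathcal{Q}(\hat p_t)+\tau\partial_t\mathcal{Q}(\hat p_t),
\end{aligned}
\right.
\end{equation*}
supplemented with \eqref{homog_dirichlet} and \eqref{init_cond}. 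Any fixed point of $\mathcal{T}$ is a solution of \eqref{Main_system}.

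The first substantive step is to ensure non-degeneracy of the principal coefficient $1-2k(\hat\Theta)\hat p$. Using $H^2(\Omega)\hookrightarrow L^\infty(\Omega)$ for $d\leq 3$ and the fundamental theorem of calculus in time, $\nLinf{\hat p(t)}\lesssim \nHtwo{p_0}+T^{1/2}\|\hat p_t\|_{L^2 H^2}$, so the hypothesis $\nHtwo{p_0}+\nHone{p_1}+\nLtwo{p_2}\leq\delta$ together with $\hat p\in\mathcal{B}_R$ and \eqref{k_1} yields $1-2k(\hat\Theta)\hat p\geq 1/2$ on $\Omega\times(0,T)$, provided that $\delta$ and $T$ are small enough (depending on $R_1$ and $R$). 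This is the only place in which smallness of a lower-order norm of the pressure is genuinely required.

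The second step is linear well-posedness of $\mathcal{T}(\hat p,\hat\Theta)$ in $X_p\times X_\Theta$, with bounds that grow at most polynomially in $R_1$. For $p$ I would use a Galerkin scheme and test successively with $p_t$, $-\Delta p_t$, $-\Delta p_{tt}$, and, after differentiating the equation in time, with $p_{ttt}$. The strong damping $-b\Delta p_t$ supplies the dissipation of $\nHthree{p_t}^2+\nHtwo{p_{tt}}^2+\nLtwo{p_{ttt}}^2$ needed to absorb the commutators of the variable coefficient with the spatial derivatives, while the polynomial growth bounds \eqref{h''_assump}--\eqref{properties_k} together with Sobolev embeddings control nonlinear products such as $\nabla^3(k(\hat\Theta)\hat p)$ and $\nabla(\tilde h(\hat\Theta)\Delta\hat p)$. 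The hyperbolic Pennes equation is linear in $\Theta$ and can be treated by a standard energy estimate; the source $\mathcal{Q}(\hat p_t)+\tau\partial_t\mathcal{Q}(\hat p_t)$ is bounded via the $X_p$-norm of $\hat p_t,\hat p_{tt}$. All constants are uniform in $\tau$ because $\tau$ enters only linearly and multiplies positive dissipation terms.

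The third step is to verify that $\mathcal{T}$ maps $\mathcal{B}_R$ into itself for some $R=R(R_1)$ and is a strict contraction in the lower-order metric
\begin{equation*}
d\bigl((p,\Theta),(\tilde p,\tilde\Theta)\bigr)=\nLinfHone{p-\tilde p}+\nLinfLtwo{p_t-\tilde p_t}+\nLinfHone{\Theta-\tilde\Theta}+\nLinfLtwo{\Theta_t-\tilde\Theta_t}.
\end{equation*}
Subtracting the equations for two iterates and rerunning the same energy estimates on the differences, the non-degeneracy bound together with the smallness of $\delta$ and a small factor of $T$ render the Lipschitz constant strictly less than one. The main obstacle is the quasilinear character of the pressure equation: closing the top-level energy estimate \emph{without} requiring smallness of the higher-order data forces one to isolate a lower-order Sobolev factor in every nonlinear product by interpolation, so that either $\delta$ or a small power of $T$ can be extracted from each multiplicative constant. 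Once this bookkeeping is in place, the Banach contraction principle on the closed $d$-ball delivers the unique $(p,\Theta)\in X_p\times X_\Theta$ asserted by the theorem.
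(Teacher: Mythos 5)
The paper does not supply its own proof of this theorem; it is explicitly recalled from \cite{Benabbas_Said_Houar_2023}, where (as the paper states) it was established ``by employing the energy method together with a fixed point argument.'' Your proposal is therefore the \emph{same kind} of argument the paper relies on: a Banach fixed-point map for the linearized system, Galerkin--energy estimates for the linear problem, and contraction in a lower-order metric. That is consistent with the paper's description.

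There is, however, one concrete soft spot. Your non-degeneracy argument bounds
\begin{equation*}
\nLinf{\hat p(t)}\lesssim \nHtwo{p_0}+T^{1/2}\|\hat p_t\|_{L^2(0,T;H^2)}\lesssim \delta + T^{1/2}R,
\end{equation*}
and then requires \emph{both} $\delta$ and $T$ small. The theorem is phrased the other way around: $T>0$ is fixed and $\delta_1=\delta_1(T,R_1)$ adjusts to it. If $T$ is not small, the $T^{1/2}R$ term is a lower bound that no amount of $\delta$-smallness can remove, so the FTC argument cannot yield non-degeneracy at fixed $T$. The paper also remarks that the refinement over \cite{Benabbas_Said_Houar_2023} is precisely to drop smallness of the high-order bound $R_1$ and keep only $\delta$ small. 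The mechanism that makes this work (and that drives the global analysis in Section~\ref{Section_3}) is a two-tier ball: require $\|\hat p\|_{X_p}\lesssim R(R_1)$ \emph{and simultaneously} a lower-order bound such as $\nLinfLtwo{\hat p}+\nLinfHone{\hat p_t}+\nLinfLtwo{\hat p_{tt}}\lesssim \tilde\delta$, and then invoke Agmon's interpolation \eqref{Agmon}, $\nLinf{\hat p}\lesssim \|\hat p\|_{L^2}^{1-d/4}\|\hat p\|_{H^2}^{d/4}\lesssim \tilde\delta^{\,1-d/4}R^{\,d/4}$, which is small for $\tilde\delta$ small irrespective of $T$. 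You should build the invariant ball with this structure (and show both the high- and low-order a priori estimates are preserved by $\mathcal{T}$), replacing the FTC bound; the rest of your argument then closes as you describe.

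A smaller point: the claim that ``all constants are uniform in $\tau$ because $\tau$ enters only linearly and multiplies positive dissipation terms'' is not quite accurate --- $\tau m\Theta_{tt}$ is not dissipative. Uniformity in $\tau$ holds, but it comes from the specific weighting of $\tau$ in the energy functionals (as in \eqref{heat_energy_1}), not from a blanket sign argument.
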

 The proof of Theorem \ref{wellposedness_thm} in \cite{Benabbas_Said_Houar_2023} relied on   assuming small values of $R_1$. However, it can be refined by requiring  only $\delta$ to be small, allowing  $R_1$ to be arbitrarily large. 
\section{Main results}\label{Section_Main_Result}
In this section, we present the main results of this paper and explain the strategy of the proof. The global existence result is stated in Theorem \ref{Them_Global_solutions}, while the decay rate  is contained in Theorem \ref{Them_exponential_decay}. The proof of these results will be given in  Section \ref{Sec_Global Existence}. 
\begin{theorem}[\textbf{Global existence}] \label{Them_Global_solutions}
Let the initial data and the medium parameters satisfy Assumptions \ref{Assumption1} and  \ref{Assumption_compatibility}. Let $M_0>0$ be such that  
\begin{equation} \label{initial_data_boundedness}
\|p_0 \|_{H^3}^2 +\| p_1 \|_{H^2}^2+\| p_2 \|_{H^1}^2+ \|\Theta_0 \|_{H^2}^2 +\| \Theta_1 \|_{H^1}^2 \leq M_0.
\end{equation}
Then there exists $\eta_0= \eta_0(M_0)>0$ sufficiently small, such that if 
\begin{equation} \label{smallness_initial_data}
\|p_0 \|_{H^2}^2 +\| p_1 \|_{H^1}^2+\| p_2 \|_{L^2}^2+ \|\Theta_0 \|_{H^1}^2 +\| \Theta_1 \|_{L^2}^2 \leq  \eta_0,
\end{equation}
the system \eqref{Main_system} admits a unique global solution $(p, \Theta) \in X_p \times X_\Theta$, satisfying for all $t \in [0, \infty)$
\begin{equation}
\begin{aligned}
&\| \Delta p(t) \|^2_{L^2} + \| \nabla p_t(t) \|^2_{L^2}+ \| p_{tt}(t) \|^2_{L^2} \leq C_1(M_0) \eta_0 ,  \quad  \text{and}\\
& \| p(t) \|^2_{H^3} + \| p_t(t) \|^2_{H^2}+ \| p_{tt}(t) \|^2_{H^1} \leq C_2 M_0.
\end{aligned} 
\end{equation}
\end{theorem}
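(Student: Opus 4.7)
The approach is a continuity (bootstrap) argument on top of the local well-posedness result of Theorem~\ref{wellposedness_thm}, using two layers of energy. Define a lower-order energy $\mathcal{E}_L(t)$ built from $\|\Delta p\|_{L^2}^2+\|\nabla p_t\|_{L^2}^2+\|p_{tt}\|_{L^2}^2$ together with its temperature analogue ($\|\nabla\Theta\|_{L^2}^2+\|\Theta_t\|_{L^2}^2+\tau\|\Theta_{tt}\|_{L^2}^2$), and a higher-order energy $\mathcal{E}_H(t)$ comprising $\|p\|_{H^3}^2+\|p_t\|_{H^2}^2+\|p_{tt}\|_{H^1}^2$ together with $\|\Theta\|_{H^2}^2+\|\Theta_t\|_{H^1}^2+\|\Theta_{tt}\|_{L^2}^2$. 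Assuming the energy estimates of Section~\ref{Section_3} are in hand, I would first invoke Theorem~\ref{wellposedness_thm} to produce a solution $(p,\Theta)\in X_p\times X_\Theta$ on some short interval $[0,T_0]$, and set
\[
T^\ast=\sup\{T\ge T_0\,:\,(p,\Theta)\in X_p\times X_\Theta\ \text{on}\ [0,T],\ \mathcal{E}_L(t)\le 4C_1(M_0)\eta_0\ \text{on}\ [0,T]\}.
\]
The goal is to show $T^\ast=\infty$ by strictly improving the bootstrap.

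Under the running hypothesis on $\mathcal{E}_L$, the Sobolev embedding $H^2(\Omega)\hookrightarrow L^\infty(\Omega)$ (valid for $d\le 3$) gives $\|p\|_{L^\infty}\lesssim \sqrt{\mathcal{E}_L}$, so (K1) implies $|2k(\Theta)p|<\tfrac12$ after choosing $\eta_0$ small enough; the quasilinear coefficient $1-2k(\Theta)p$ in \eqref{modified_temp_eq} is therefore uniformly non-degenerate on $[0,T^\ast)$, which is needed both for extending the solution and for giving meaning to the energy identities. The hard part will be closing the estimates \emph{without} requiring $M_0$ to be small. I plan to use standard multipliers ($p_t,\,-\Delta p_t,\,-\Delta p_{tt},\,p_{ttt},\,-\Delta p_{ttt}$ on the acoustic side and $\Theta_t,\,-\Delta\Theta_t,\,\Theta_{tt}$ on the thermal side), harvesting the dissipation produced by $-b\Delta p_t$ and by the damping $(m+\tau\ell)\Theta_t$ in \eqref{bioheat_eq}. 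Each nonlinear contribution arising from $\tilde h(\Theta)\Delta p$, $k(\Theta)(p_t)^2$, the quasilinear cross terms generated by differentiating $(1-2k(\Theta)p)p_{tt}$, and the source $\mathcal{Q}(p_t)+\tau\partial_t\mathcal{Q}(p_t)$ will be split via Gagliardo--Nirenberg-type interpolations,
\[
\|u\|_{L^\infty}\lesssim \|u\|_{H^1}^{1/2}\|u\|_{H^2}^{1/2},\qquad \|u\|_{H^2}\lesssim \|u\|_{H^1}^{1/2}\|u\|_{H^3}^{1/2},
\]
so that each product carries at least one factor bounded by $\sqrt{\mathcal{E}_L}$ while its remaining factors are at worst linear in $\sqrt{\mathcal{E}_H}$. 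The polynomial bounds (H2)--(H3) and (K2) from Assumption~\ref{Assumption1} ensure that $\tilde h(\Theta),\,k(\Theta)$ and their derivatives are controlled in $L^\infty$ by a polynomial $P(\sqrt{\mathcal{E}_H})$.

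Putting this together, I expect differential inequalities of the schematic form
\[
\frac{d}{dt}\mathcal{E}_L+c_0\,\mathcal{D}_L\le P(M_0)\sqrt{\mathcal{E}_L}\,\mathcal{D}_L,\qquad
\frac{d}{dt}\mathcal{E}_H+c_0\,\mathcal{D}_H\le P(M_0)\sqrt{\mathcal{E}_L}\,\mathcal{D}_H+P(M_0)\mathcal{D}_L,
\]
where $\mathcal{D}_L,\mathcal{D}_H$ are the associated dissipation functionals and $P$ depends continuously on $M_0$ but not on $\eta_0$. Choosing $\eta_0=\eta_0(M_0)$ so small that $P(M_0)\sqrt{4C_1(M_0)\eta_0}\le c_0/2$, the first inequality absorbs the nonlinear term and, after integration in time, yields $\mathcal{E}_L(t)+(c_0/2)\int_0^t\mathcal{D}_L\,ds\le \mathcal{E}_L(0)\le C\eta_0$ for all $t\in[0,T^\ast)$, which gives $\mathcal{E}_L(t)\le C_1(M_0)\eta_0$ strictly below the bootstrap threshold. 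Feeding this back into the second inequality and using that $\int_0^t\mathcal{D}_L\lesssim\eta_0$, one obtains $\mathcal{E}_H(t)\le \mathcal{E}_H(0)+C(M_0)\eta_0\le C_2 M_0$ after a further shrinking of $\eta_0$. Both improvements contradict the maximality of $T^\ast$ unless $T^\ast=\infty$; iterating Theorem~\ref{wellposedness_thm} from successive starting times then extends $(p,\Theta)$ to a global solution in $X_p\times X_\Theta$, and the quantitative bounds in Theorem~\ref{Them_Global_solutions} are precisely the closed bootstrap inequalities.
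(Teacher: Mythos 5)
Your overall strategy coincides with the paper's: a two-layer energy bootstrap, local existence from Theorem~\ref{wellposedness_thm}, non-degeneracy controlled by the low-order energy, and interpolation inequalities to factor a positive power of the lower-order energy out of every nonlinear contribution. However, there is a genuine gap in the way you set up the bootstrap. Your continuation set only carries the running bound $\mathcal{E}_L(t)\le 4C_1(M_0)\eta_0$, yet the differential inequalities you propose have coefficients $P(M_0)$ on the right-hand side. Those coefficients arise from quantities such as $\|\Theta\|_{H^2}$, $\|k'(\Theta)\|_{L^\infty}$, $\|\tilde h(\Theta)\|_{L^\infty}$, i.e.\ polynomials in the \emph{running} value of $\mathcal{E}_H(t)$, not of $M_0$. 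Asserting that they are $P(M_0)$ presupposes the uniform bound $\mathcal{E}_H(t)\lesssim M_0$, which is precisely what you are trying to prove: without an a priori hypothesis on $\mathcal{E}_H$, the $\mathcal{E}_H$-inequality is of Riccati type and its solutions may blow up in finite time, so the improvement step cannot be carried out. The paper avoids this by including all three bounds in its a priori assumption \eqref{Assumption_Small}: $\mathbf{E}_{\mathsf{low}}(t)\le\eta$, $\mathbf{E}_{\mathsf{high}}(t)\le M$, and $2\|k(\Theta)p\|_{L^\infty L^\infty}\le\mathsf{m}<1$; after closing the estimates it then shows that $\mathbf{E}_{\mathsf{high}}\le\tilde C_2\,\mathsf{E_{high}}(0)$, which improves the hypothesis provided $M_0\le M/(2\tilde C_2)$. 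Once you augment your bootstrap set with a running bound $\mathcal{E}_H(t)\le 2C_2M_0$ (or $\le M$ with $M$ fixed in terms of $M_0$) the argument closes exactly as you describe, so this is an oversight rather than a fundamental error. A more minor point: you list $-\Delta p_{ttt}$ among the multipliers, but that would require $p_{ttt}\in L^2(0,T;H^2)$, which exceeds the regularity class $X_p$; the paper tests with $p_{tt}$, $-\Delta p$, $-\Delta p_t$, $-\nabla\Delta p$ (after applying $\nabla$ to the equation), $p_{ttt}$, $-\Delta p_{tt}$ on the acoustic side, and $\Theta_t$, $\Theta$, $-\Delta\Theta$, $-\Delta\Theta_t$ (and the time-differentiated analogues) on the thermal side.
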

\begin{theorem}[\textbf{Decay rate}] \label{Them_exponential_decay} Let $(p,\Theta)$ be the global-in-time solution to \eqref{Main_system} provided in Theorem \ref{Them_Global_solutions}. Then under the assumptions of Theorem \ref{Them_Global_solutions}, there exist $\tilde{C}= \tilde{C}(M_0, \eta_0)$ and $\omega >0$ such that the stability estimate
\begin{equation}
\begin{aligned}
%&\| \Delta p(t) \|^2_{L^2} + \| \nabla p_t(t) \|^2_{L^2}+ \| p_{tt}(t) \|^2_{L^2} \leq \tilde{C}_1(R, \eta) e^{-\omega_1 t},  \quad  \text{and}\\
& \| p(t) \|^2_{H^3} + \| p_t(t) \|^2_{H^2}+ \| p_{tt}(t) \|^2_{H^1} \leq \tilde{C}(M_0, \eta_0) e^{-\omega t}
\end{aligned} 
\end{equation}
holds for all $t>0$.
\end{theorem}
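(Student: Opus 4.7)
The plan is to upgrade the uniform-in-time boundedness supplied by Theorem~\ref{Them_Global_solutions} into exponential decay by constructing a Lyapunov functional equivalent to the full energy and satisfying a differential inequality of the form $\mathcal{L}'(t)+\omega\,\mathcal{L}(t)\leq 0$. Set
\begin{equation*}
E(t)=\|p(t)\|_{H^3}^2+\|p_t(t)\|_{H^2}^2+\|p_{tt}(t)\|_{H^1}^2+\|\Theta(t)\|_{H^2}^2+\|\Theta_t(t)\|_{H^1}^2+\|\Theta_{tt}(t)\|_{L^2}^2.
\end{equation*}
The energy identities derived in Section~\ref{Section_3} to prove global existence produce, at every differentiation level, inequalities of the form $\frac{d}{dt}E_j(t)+D_j(t)\leq C\,\mathcal{N}_j(t)$, where $D_j$ is the natural dissipation produced by the strong acoustic damping $-b\Delta p_t$ and by the Cattaneo--Pennes damping (yielding $\|\nabla p_t\|^2,\ldots,\|\nabla p_{tt}\|^2$ and $\|\nabla\Theta\|^2,\|\Theta_t\|^2,\|\Delta\Theta\|^2,\|\Theta_{tt}\|^2$), while $\mathcal{N}_j$ collects the nonlinear remainders that under the smallness condition \eqref{smallness_initial_data} are already shown to be absorbable.

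The natural dissipation $\sum_j D_j$ however misses the ``positional'' energy norms $\|\nabla p\|^2$, $\|\Delta p\|^2$, $\|\nabla\Delta p\|^2$, $\|p_t\|^2$, $\|p_{tt}\|^2$ and $\|\Theta\|^2$. I would recover these through standard multiplier cross-terms. Multiplying the pressure equation by $p$ and using the dissipation of $\|\nabla p_t\|^2$ converts the wave operator into $\|\nabla p\|^2$ up to controllable remainders; similarly for $-\Delta p$ against the equation, and at the differentiated levels. For the temperature, the elliptic and zero-order terms $\ell\Theta-\kappa_a\Delta\Theta$ are already positive definite, so only a cross term $\int_\Omega\Theta_t\Theta\,\mathrm{d}x$ is needed to tie $\|\Theta_t\|^2$ to $\|\Theta\|^2$ through the hyperbolic structure. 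I then define
\begin{equation*}
\mathcal{L}(t)=E(t)+\sum_{\alpha}\varepsilon_\alpha\,\mathcal{I}_\alpha(t),
\end{equation*}
where the $\mathcal{I}_\alpha$ are the above cross-terms and the parameters $\varepsilon_\alpha>0$ are chosen successively small so that $c_1 E(t)\leq\mathcal{L}(t)\leq c_2 E(t)$. Adding up the natural and perturbed identities yields, after rearranging,
\begin{equation*}
\frac{d}{dt}\mathcal{L}(t)+c_3\,E(t)\leq C\,\mathcal{N}(t).
\end{equation*}

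The main obstacle is controlling the nonlinear remainder $\mathcal{N}(t)$, which inherits the genuinely quasilinear structure of \eqref{Main_system}: products coming from the degeneracy factor $(1-2k(\Theta)p)$, the variable-coefficient term $\tilde h(\Theta)\Delta p$, the source $2k(\Theta)(p_t)^2$, and the heating $\mathcal{Q}(p_t)+\tau\partial_t\mathcal{Q}(p_t)$. The strategy is to apply the same interpolation inequalities used in Section~\ref{Section_3}, combined with Assumption~\ref{Assumption1} and the uniform $H^3\times H^2$ bound from Theorem~\ref{Them_Global_solutions}, so that each term of $\mathcal{N}(t)$ is bounded by a product of (i) a factor that is small thanks to \eqref{smallness_initial_data} and (ii) at most the high-order energy $E(t)$. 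This is where the separation between lower- and higher-order norms is crucial: the small prefactors always come from the low-order Sobolev norms controlled by $\eta_0$, while the higher norms are merely kept bounded by $M_0$.

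For $\eta_0$ small enough we then absorb $C\mathcal{N}(t)$ into $\tfrac{c_3}{2}E(t)$ and obtain $\frac{d}{dt}\mathcal{L}(t)+\tfrac{c_3}{2}E(t)\leq 0$. The equivalence $\mathcal{L}\sim E$ turns this into $\mathcal{L}'(t)+\omega\,\mathcal{L}(t)\leq 0$ for some $\omega>0$, and Gr\"onwall's inequality yields $\mathcal{L}(t)\leq\mathcal{L}(0)e^{-\omega t}$. Since $\mathcal{L}(0)\leq \tilde C(M_0,\eta_0)$ by the regularity of the initial data and the compatibility conditions in Assumption~\ref{Assumption_compatibility}, the equivalence $E\leq c_1^{-1}\mathcal{L}$ gives the stated exponential decay of $\|p(t)\|_{H^3}^2+\|p_t(t)\|_{H^2}^2+\|p_{tt}(t)\|_{H^1}^2$.
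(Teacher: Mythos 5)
Your plan is a valid classical strategy, but it is a genuinely different route from the paper's, and it is built on a factually incorrect premise about the dissipation structure. You claim the natural dissipation misses $\|\nabla p\|^2$, $\|\Delta p\|^2$, $\|\nabla\Delta p\|^2$, $\|p_t\|^2$, $\|p_{tt}\|^2$, $\|\Theta\|^2$, but the dissipation rates $D_1[p]$, $D_2[p]$, $\mathcal{D}[\Theta]$ defined in \eqref{dissipation} and \eqref{heat_dissip_1}--\eqref{heat_dissiaption_energy_2} already contain $\|\sqrt{h(0)}\Delta p\|^2_{L^2}$, $\|\sqrt{h(0)}\nabla\Delta p\|^2_{L^2}$ and $\ell\|\Theta\|^2_{L^2}$ explicitly, and the remaining terms in the energies are dominated via Poincar\'{e} and elliptic regularity: $\|p_{tt}\|^2\lesssim\|\nabla p_{tt}\|^2$, $\|\nabla p_t\|^2\lesssim\|\Delta p_t\|^2$, $\|\nabla p\|^2\lesssim\|\Delta p\|^2$, etc. The reason no ``positional'' dissipation is missing is that the multipliers already used in Section~\ref{Section_3} are not just the velocity multipliers (e.g.\ $p_t$) but also $-\Delta p$, $-\Delta p_t$, $-\nabla\Delta p$, $-\Delta p_{tt}$, $\Theta$, $-\Delta\Theta$, which directly produce the elliptic dissipation from the strong damping $-b\Delta p_t$ and the Cattaneo structure. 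Consequently $\mathsf{E_{high}}\lesssim\mathsf{D_{high}}$ holds outright, and no Lyapunov cross-terms are needed.

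The paper's proof exploits this: it takes the already-established integral energy inequality \eqref{estimate_combined_E_high}, restarted from an arbitrary time $s$ rather than $0$, absorbs the nonlinear terms for small $\eta$ to get $\mathsf{E_{high}}(t)+\int_s^t\mathsf{D_{high}}\,\mathrm{d}r\lesssim\mathsf{E_{high}}(s)$, inserts $\mathsf{E_{high}}\lesssim\mathsf{D_{high}}$, and then concludes by the integral Gr\"{o}nwall lemma (Lemma~\ref{Gronwall}). This avoids the need to (i) construct a Lyapunov functional $\mathcal{L}$ and verify $c_1 E\le\mathcal{L}\le c_2 E$, and (ii) re-derive all the differential-form identities with additional cross-terms — both of which your plan defers. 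Your approach would in principle also succeed, but it duplicates work already done in Section~\ref{Section_3} and introduces bookkeeping (the $\varepsilon_\alpha$-smallness hierarchy, the $\tau$-dependence of the cross terms) that the paper bypasses entirely by the observation $\mathsf{E_{high}}\lesssim\mathsf{D_{high}}$. If you were to pursue your route, the essential missing step is this comparison between energy and dissipation; once you notice it, the cross-terms become superfluous and the argument collapses to the paper's.
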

\subsection{Discussion of the main result}\label{Sec:Discussion}
 Before moving onto the proof, we briefly discuss the statements made above in Theorems \ref{Them_Global_solutions} and \ref{Them_exponential_decay}. 
 \begin{enumerate}
\item[1.] Similarly to the result in \cite{bongarti2021vanishing} (see also \cite{Said-Houari_2022_CDS}), we only assume  the lower-order  Sobolev norms of the initial data to be small, while the higher-order norms can be arbitrarily large. To do this, we do not rely directly on the embeddings   $H^2\hookrightarrow L^\infty, H^1\hookrightarrow L^4$, but we instead use the interpolation inequalities \eqref{lady}, \eqref{Agmon}, which give the flexibility to impose smallness on lower-order norms   
when we estimate the nonlinear terms. 
\item[2.] Among the difficulties encountered when trying to prove our result stems from the fact that the two equations in \eqref{modified_temp_eq} are coupled via the temperature in the sound coefficients of the Westervelt equation and via the source term in the hyperbolic Pennes' equation.  This differs from the thermoelastic systems \cite{racke2000evolution}, in which the coupling is within the linearized system, which generates dissipation for both components of the solution (even when the elastic component is not directly damped).  Hence, the presence of the term $-b\Delta p_t,\, b>0$ is crucial in our analysis.  It is an interesting open problem to show a global existence in the case $b=0$.     
\item[3.] The hyperbolic nature of the Pennes' equation \eqref{Hyperbolic_Pennes} introduces a higher degree of complexity in the analysis compared to the parabolic Pennes' equation  \cite{NIKOLIC2022628}; first, it does not yield any smoothing properties of the equation of the temperature and second it produces the source term $\tau \partial_t\mathcal{Q}(p_t)$, which requires more regularity assumption on the pressure component to control it.
\end{enumerate}

%\item [4.] Here, we considered the Westervelt... 

%\subsection{Main idea of the proof } 

\subsection{Some useful inequalities and embedding results} In preparation for the upcoming analysis, we invoke some theoretical results that we shall frequently use.    
\subsection*{Sobolev embeddings}
Among the main helpful tools are Sobolev embeddings, especially the continuous embeddings $H^1(\Om) \hookrightarrow L^4(\Om)$ and $H^2(\Om) \hookrightarrow L^\infty(\Om)$.  In particular, using Poincar\'{e}'s inequality we obtain for $v \in H^1_0(\Om)$ (see \cite[Theorem 7.18]{salsa2016partial})
\begin{equation}\label{Sobolev_Embedding}
\begin{aligned}
& \text{if} \ d>2, \quad \Vert v \Vert_{L^p} \leq C \Vert \nabla v \Vert_{L^2}  \quad \text{for} \quad  2 \leq p \leq \frac{2d}{d-2},\\
& \text{if} \ d=2, \quad \Vert v \Vert_{L^p} \leq C \Vert \nabla v \Vert_{L^2}  \quad \text{for} \quad  2 \leq p < \infty.\\
\end{aligned}
\end{equation}
Moreover, taking into account the boundedness of the operator $(-\Delta)^{-1}: L^2(\Om) \rightarrow H^2(\Om) \cap H^1_0(\Om)$, we find the inequality
$$\qquad \Vert v \Vert_{L^\infty} \leq C_1 \Vert v \Vert_{H^2} \leq C_2 \Vert \Delta v \Vert_{L^2}.$$ 
%We will also call on the 1D-embedding $H^1(0, T, L^2(\Om)) \hookrightarrow C(0, T; L^2(\Om))$. In fact, this last embedding combined with Poincar\'{e}'s inequality,  yields for all $v \in H^1(0, T, L^2(\Om))$
%\begin{equation} \label{1D_embedding}
%\begin{aligned}
%\max_{t\in [0, T]} \Vert v(t) \Vert_{L^2} &\leq C(\Vert v \Vert_{L^2 L^2}+\Vert v_t \Vert_{L^2 L^2})\\
%& \leq C(\Vert \nabla v \Vert_{L^2 L^2}+\Vert v_t \Vert_{L^2 L^2}),
%\end{aligned}    
%\end{equation}
%where the constant $C>0$ depends only on $T$ (see, e.g. \cite[Theorem 2, p. 286]{evans2010partial}).\\
\subsection*{Young's inequality} We recall Young's $\varepsilon$-inequality 
\begin{equation}
xy \leq \varepsilon x^n+C(\varepsilon) y^m, \quad \text{where}\quad \ x, y >0, \quad 1 <m,n <\infty,\quad \frac{1}{m}+\frac{1}{n}=1,
\end{equation}
and $C(\varepsilon)=(\varepsilon n)^{-m/n}m^{-1}$. In particular, we will make repeated use of  the inequality  
\begin{equation}
xy\leq \varepsilon x^2+\frac{1}{4\varepsilon}y^2, \qquad \varepsilon>0. 
\end{equation}

\subsection*{Interpolation inequalities} 
Further, we will make use of  Ladyzhenskaya's inequality for $u \in H^1(\Om)$
\begin{equation} \label{lady}
\Vert u \Vert_{L^4} \leq C\Vert u \Vert_{L^2}^{1-d/4} \Vert u \Vert_{H^1}^{d/4},\qquad 1\leq d\leq 4,  
\end{equation}
and of Agmon's interpolation inequality~\cite[Ch.\ 13]{agmon2010lectures} for functions in $H^2(\Omega)$:
\begin{equation}\label{Agmon}
\| u\|_{L^\infty(\Omega)} \leq C_{\textup{A}} \|u\|_{L^2(\Omega)}^{1-d/4} \|u\|_{H^2(\Omega)}^{d/4}, \qquad d \leq 4.
\end{equation}  
We state a version of Gronwall's inequality from which the exponential decay of solutions to \eqref{Main_system} will follow. For the proof, the reader is referred to \cite[Appendix A]{Kelliher2023}.
\begin{lemma} \label{Gronwall} Let $y \in L^1_{\textup{loc}}[0, \infty)$ be a nonnegative function. Assume that there exists $\nu>0$ such that for almost every  $s \geq 0$ and for every $t \geq s$, it holds that  
\begin{equation} \label{Gronwall_assump} 
y(t) + \nu \int_s^t y(r) \text{d}r \leq y(s). 
\end{equation}
Further, assume that \eqref{Gronwall_assump} holds for $s=0$. Then, we have for all $t \geq 0$
\begin{equation}
y(t) \leq y(0) e^{-\nu t}.
\end{equation}
\end{lemma}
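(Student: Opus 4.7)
The plan is to prove the decay estimate by a discrete iteration that converges to the continuous exponential bound, in the spirit of $\lim_{n\to\infty}(1+\nu t/n)^n = e^{\nu t}$.

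First, I observe that the hypothesis implies an essential monotonicity of $y$: applying \eqref{Gronwall_assump} at any valid $s\leq t$ with the same upper endpoint $t$ and discarding the nonnegative integral term gives $y(t)\leq y(s)$. Equivalently, for any fixed $t\geq 0$ the inequality $y(t)\leq y(r)$ holds for a.e.\ $r\in[0,t]$. This is the ingredient I will use to estimate $\int_s^t y$ from below.

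Next, fix $t>0$ and a large integer $n$. Since the set of valid $s\in[0,t]$ has full Lebesgue measure, we can select a partition $0=s_0<s_1<\dots<s_{n-1}<s_n=t$ with all interior points $s_k$, $1\leq k\leq n-1$, valid and with mesh $\delta_k:=s_{k+1}-s_k$ satisfying $\max_k\delta_k\to 0$ as $n\to\infty$; the anchor $s_0=0$ is admissible thanks to the separately stated hypothesis at $s=0$. Applying \eqref{Gronwall_assump} at $s=s_k$ with upper endpoint $s_{k+1}$ and using the essential monotonicity on $[s_k,s_{k+1}]$ to estimate
\begin{equation*}
\int_{s_k}^{s_{k+1}} y(r)\,\mathrm{d}r \;\geq\; \delta_k\, y(s_{k+1}),
\end{equation*}
we obtain the one-step recursion $(1+\nu\delta_k)\,y(s_{k+1})\leq y(s_k)$. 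Telescoping for $k=0,1,\dots,n-1$ yields
\begin{equation*}
y(t)\prod_{k=0}^{n-1}(1+\nu\delta_k)\;\leq\; y(0).
\end{equation*}

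Finally, the elementary bound $\ln(1+x)\geq x-x^2/2$ for $x\geq 0$ gives
\begin{equation*}
\ln\prod_{k=0}^{n-1}(1+\nu\delta_k)\;\geq\; \nu t-\frac{\nu^2}{2}\sum_{k=0}^{n-1}\delta_k^2\;\geq\; \nu t-\frac{\nu^2 t}{2}\,\max_k\delta_k,
\end{equation*}
so that letting $n\to\infty$ (and hence $\max_k\delta_k\to 0$), the product converges to $e^{\nu t}$ and we conclude $y(t)\leq y(0)\,e^{-\nu t}$ for every $t\geq 0$. The main subtle point is essentially measure-theoretic: since $y$ is only $L^1_{\textup{loc}}$, one must select every interior partition point in the full-measure set where \eqref{Gronwall_assump} holds, and the separately stated hypothesis at $s=0$ (which might otherwise fall in the exceptional null set) is precisely what anchors the recursion at $y(0)$. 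Once the discrete chain is in place, the passage to the limit is the standard $(1+x/n)^n\to e^{x}$ calculation.
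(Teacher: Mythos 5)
Your argument is correct, and it is worth noting that the paper itself does not prove this lemma at all: it simply refers the reader to Appendix~A of the cited work of Kelliher et al.\ \cite{Kelliher2023}. Your discrete-iteration proof is therefore a valid, self-contained substitute, and it is essentially the standard way to handle an integral-form decay inequality for a function that is merely $L^1_{\textup{loc}}$ (so that one cannot differentiate $e^{\nu t}y(t)$ directly). Each step checks out: the essential monotonicity $y(t)\leq y(r)$ for a.e.\ $r\leq t$ follows by discarding the integral; the lower bound $\int_{s_k}^{s_{k+1}}y\geq \delta_k\,y(s_{k+1})$ then gives the one-step contraction $(1+\nu\delta_k)y(s_{k+1})\leq y(s_k)$; the telescoping and the limit $\prod_k(1+\nu\delta_k)\to e^{\nu t}$ via $\ln(1+x)\geq x-x^2/2$ are elementary. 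You also correctly identify the only genuinely delicate points, namely that the interior partition nodes must be drawn from the full-measure set of admissible $s$ (which is dense, so arbitrarily fine admissible partitions exist) and that the separately assumed validity at $s=0$ is what anchors the chain at $y(0)$. A minor streamlining: since the hypothesis holds for \emph{every} upper endpoint once $s_k$ is admissible, you could phrase the recursion as $y(s_{k+1})\leq y(s_k)/(1+\nu\delta_k)$ and, choosing nodes within $o(1/n)$ of the uniform grid, reduce the limit to the textbook $(1+\nu t/n)^{-n}\to e^{-\nu t}$; but this is cosmetic and your version is complete as written.
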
 

\section{Energy analysis}\label{Section_3}
 
The main goal of this section is to derive uniform with respect to time energy estimates  for the solution $(p, \Theta)$ of \eqref{Main_system}. These estimates will play a crucial role in the subsequent proof of global well-posedness and the asymptotic behavior of the solution. First, we establish the estimate of the total energy related to the equation of $\Theta $ in \eqref{modified_temp_eq} and then we estimate the total energy associated with the equation of $p$ in \eqref{modified_temp_eq}.  From the technical point of view,  seeking only to assume smallness  on a lower topology makes the proof more involved since some extra estimates of lower-order energies are needed.  These estimates should be properly factored out in the nonlinear estimates using the interpolation inequalities \eqref{lady} and  \eqref{Agmon}. 

\subsection{Energy functionals}
We begin by introducing the energy functionals and the associated dissipation rates that will be used in the proofs. 
\subsubsection{Bioheat energies}
We define the energies for the bioheat equation in \eqref{modified_temp_eq}:
\begin{subequations}
\begin{equation} \label{heat_energy_1}
\begin{aligned}
\mathcal{E}_k[\Theta](t):=&\,\frac{1}{2}  (m+\ell+ \tau \ell) \| \partial_t^k\Theta(t) \|_{L^2}^2+ \frac{\tau m}{2} \| \partial_t^k\Theta_t(t) \|_{L^2}^2\\
&\qquad +\frac{\kappaa}{2} \| \nabla \partial_t^k\Theta(t) \|_{L^2}^2,\qquad k=0,1.
%\\
%\mathcal{E}_1[\Theta](t)&\,:=\frac{1}{2} \big( (m+\ell+ \tau \ell) \| \Theta_t(t) \|_{L^2}^2+ \tau m \| \Theta_{tt}(t) \|_{L^2}^2+\kappaa \| \nabla \Theta_t(t) \|_{L^2}^2\big), \quad 0 \leq t \leq T.
\end{aligned}
\end{equation}
The corresponding dissipation rates are given by 
\begin{equation} \label{heat_dissip_1}
\begin{aligned}
\mathcal{D}_k[\Theta](t) :=&\, \ell \| \partial_t^k\Theta(t) \|_{L^2}^2+ (m+ \tau \ell) \|\partial_t^k \Theta_t(t) \|_{L^2}^2\\
&\qquad +\kappaa \| \nabla \partial_t^k\Theta(t) \|_{L^2}^2, \qquad k=0,1. 
%\\
%\mathcal{D}_1[\Theta](t)&\, := \ell \| \Theta_t(t) \|_{L^2}^2+ (m+ \tau \ell) \| \Theta_{tt}(t) \|_{L^2}^2+\kappaa \| \nabla \Theta_t(t) \|_{L^2}^2, \qquad 0 \leq t \leq T.
\end{aligned}
\end{equation}
\end{subequations}
Thus, the total energy for the temperature equation and the total  dissipation rate are given, respectively, as: 
\begin{subequations}  \label{heat_energy_total}
\begin{equation} \label{heat_energy_2}
\begin{aligned}
&\, \mathcal{E}[\Theta](t)= \mathcal{E}_0[\Theta](t)+\mathcal{E}_1[\Theta](t)+\tau m \| \nabla \Theta_t \|_{L^2}^2 + \kappaa \| \Delta \Theta(t) \|_{L^2}^2, 
\end{aligned}    
\end{equation}
and 
\begin{equation} \label{heat_dissiaption_energy_2}
\begin{aligned}
\mathcal{D}[\Theta](t)=&\,\mathcal{D}_0[\Theta](t)+\mathcal{D}_1[\Theta](t)
+ (m+ \tau \ell) \| \nabla \Theta_t(t) \|_{L^2}^2+\kappaa \| \Delta \Theta \|^2_{L^2}. 
\end{aligned}    
\end{equation}
\end{subequations}
\subsubsection{Acoustic energies}
The acoustic energies are defined as 
\begin{equation}
    \begin{aligned}
        %E_1[p](t)&:=\frac{1}{2} \big(\Vert p_t(t) \Vert^2_{L^2}+\Vert \sqrt{h (0)}\nabla p(t) \Vert^2_{L^2} \big),\\
        E_1[p, \Theta](t)&:=\frac{1}{2} \big(\Vert  \sqrt{1-2 k( \Theta) p} p_{t t}(t) \Vert^2_{L^2}+ (1+h (0)) \Vert \nabla p_t(t) \Vert^2_{L^2} \\
        &\,\qquad \qquad + (b+h (0))\Vert \Delta p(t) \Vert^2_{L^2} \big),\\   
        E_2[p](t)&:=\frac{1}{2} \big((1+b)\Vert  \nabla p_{tt}(t) \Vert_{L^2}^2+b \Vert  \nabla \Delta p(t) \Vert^2_{L^2} +h(0) \| \Delta p_t \|_{L^2}^2\big).\\
    \end{aligned}
\end{equation}
%The total acoustic energy is given by
%\begin{equation}\label{Energy_Tot}
%    \textbf{E}[p](t):=\sum_{k=1}^3 E_k[p](t), \qquad t\geq 0.
%\end{equation}
We denote by  $D_i[p], i=1, 2$  the corresponding dissipation rates given by 
\begin{equation}\label{dissipation}
\begin{aligned}
%D_1[p](t)&\,:= b \Vert \nabla p_t(t) \Vert^2_{L^2},\\
D_1[p](t)&\,:= b\Vert \nabla p_{tt}(t) \Vert^2_{L^2} +     \| \sqrt{h(0)} \Delta p (t)\|_{L^2}^2 + b \| \Delta p_t (t) \|_{L^2}^2,\\
D_2[p](t)&\,:= \Vert \sqrt{h(0)} \nabla \Delta p(t) \Vert^2_{L^2} + b \Vert \Delta p_{tt}(t) \Vert^2_{L^2}+\| p_{ttt}(t)  \|^2_{L^2}.
\end{aligned} 
\end{equation}
As we will be applying a continuity argument to obtain global a priori bounds for the local solution $(p,\Theta) \in X_p \times X_\Theta$, we make the following a priori assumption \cite{Tao2006NonlinearDE}  
\begin{equation}
2 \|k(\Theta) p\|_{L^\infty L^\infty}\leq \mathsf{m} <1, 
\end{equation}
with $ \mathsf{m}>0$ independent of $t$. This then yields that there exist positive constants  $0<\alpha_1 <\alpha_2 $ independent of time such that 
\begin{equation} \label{degeneracy}
\alpha_1 \leq 1-2 k(\Theta) p \leq \alpha_2.
\end{equation}
This is necessary to avoid degeneracy of the pressure wave equation and  persists for all times as long as we impose a smallness condition on a lower-order norm of the initial data, see  \cite{kaltenbacher2009global, LasieckaOng} and Section \ref{Section_Proof_Global} below.
\subsection{The bioheat equation}   
In what follows, we will show that, for the local-in-time solution to the temperature equation in \eqref{Main_system} $\Theta \in X_\Theta$, the energies $\mathcal{E}_0[\Theta], \mathcal{E}_1[\Theta]$ and $\mathcal{E}[\Theta]$ satisfy a priori  estimates that are uniform with respect to $t$. In fact, we have a  global bound for the total energy $\mathcal{E}[\Theta]$, which is stated in the proposition below.
\begin{proposition} \label{prop_theta} Let $\tau >0$. For all $t \geq 0$, the solution $(\Theta, p) \in X_p \times X_\Theta$ satisfies 
%\begin{equation} \label{main_estimate_energy_theta}
%\begin{aligned}
%&\, \mathcal{E}(t)+ \int_0^t \mathcal{D}(s) \ds \\ \leq &\, C_\Theta\,  \mathcal{E}(0)+ C_\Theta\, \int_0^t   \left[ E_2+ \left(E_1^{1- \frac{d}{4}}+ E_2^{1- \frac{d}{4}} \right) E_3^{ \frac{d}{4}} \right]  (D_1+D_2+D_3) \ds 
%\end{aligned}
%\end{equation} 
%{\color{red} Is the above notation better? What do you think?}\\
%{\color{magenta} Yes, it looks better this way. I also noticed that maybe we don't need the estimate for $E_1[p](t)$ because we can get that it is bounded through the estimate for $E_2[p]$. If we remove $E_1[p]$, we will just have one lower energy and one higher.} {\color{blue} We will discuss this.}\\
\begin{equation} \label{main_estimate_energy_theta}
\begin{aligned}
&\, \mathcal{E}[\Theta](t)+ \int_0^t \mathcal{D}[\Theta](s) \ds \\
 \lesssim &\,\mathcal{E}[\Theta](0)+ \int_0^t   \Big( E_1[p]+ (E_1[p])^{1- \frac{d}{4}} (E_2[p])^{ \frac{d}{4}} \Big)\big( D_1[p] + D_2[p]\big)   \ds 
\end{aligned}
\end{equation}
where the hidden constant does not depend on $t$.
\end{proposition}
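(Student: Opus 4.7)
The strategy is the standard multiplier method tailored to the hyperbolic Pennes equation \eqref{bioheat_eq}, combined with Ladyzhenskaya/Agmon-type interpolation to control the pressure-driven source. I would test \eqref{bioheat_eq} with three multipliers: $\Theta_t$, $\Theta$, and $-\Delta\Theta_t$. The $\Theta_t$ test (after integration by parts, using $\Theta|_{\partial\Om}=0$) furnishes the time derivative of $\tfrac{\tau m}{2}\|\Theta_t\|_{L^2}^2 + \tfrac{\ell}{2}\|\Theta\|_{L^2}^2 + \tfrac{\kappaa}{2}\|\nabla\Theta\|_{L^2}^2$ together with the dissipation $(m+\tau\ell)\|\Theta_t\|_{L^2}^2$; the $\Theta$ test produces the missing $\ell\|\Theta\|_{L^2}^2 + \kappaa\|\nabla\Theta\|_{L^2}^2$ dissipation at the cost of a cross term $\tau m\|\Theta_t\|_{L^2}^2$ which can be absorbed if the $\Theta$ identity is added with a sufficiently small weight. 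The $-\Delta\Theta_t$ test (boundary terms vanish because $\Theta_t|_{\partial\Om}=0$) yields the time derivative of $\tfrac{\tau m}{2}\|\nabla\Theta_t\|_{L^2}^2 + \tfrac{\kappaa}{2}\|\Delta\Theta\|_{L^2}^2$ and the dissipation $(m+\tau\ell)\|\nabla\Theta_t\|_{L^2}^2$, thereby supplying the higher-order components of $\mathcal{E}[\Theta]$ and $\mathcal{D}[\Theta]$.

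Next I would differentiate \eqref{bioheat_eq} once in $t$ and repeat the $\Theta_{tt}$- and $\Theta_t$-tests on the resulting equation to obtain the analogous identity at the level of $\mathcal{E}_1[\Theta]$. Combining all identities with appropriate positive weights yields
\[
\tfrac{d}{dt}\mathcal{E}[\Theta] + \mathcal{D}[\Theta] - \kappaa\|\Delta\Theta\|_{L^2}^2 \;\lesssim\; \text{(nonlinear pressure source integrals)}.
\]
The remaining $\kappaa\|\Delta\Theta\|_{L^2}^2$ dissipation is recovered by treating \eqref{bioheat_eq} as an elliptic identity, $\kappaa\Delta\Theta = \tau m\Theta_{tt} + (m+\tau\ell)\Theta_t + \ell\Theta - \mathcal{Q}(p_t) - \tau\partial_t\mathcal{Q}(p_t)$, and taking $L^2$ norms: $\|\Delta\Theta\|_{L^2}^2$ is then bounded by quantities already present in $\mathcal{D}[\Theta]$ plus the $L^2$ norms of $\mathcal{Q}$ and $\partial_t\mathcal{Q}$. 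Integrating in time produces an inequality of exactly the form \eqref{main_estimate_energy_theta}.

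The key technical step is to bound the nonlinear source integrals generated by $\mathcal{Q}(p_t)=\tfrac{2b}{\rhoa c_{a}^4}p_t^2$ and its time derivatives $\partial_t\mathcal{Q}\propto p_t p_{tt}$, $\partial_{tt}\mathcal{Q}\propto p_{tt}^2 + p_t p_{ttt}$. After testing and, where convenient, integration by parts, these reduce to integrals of the form $\int p_t^2\psi\dx$, $\int p_t p_{tt}\psi\dx$, $\int p_{tt}^2\psi\dx$, $\int p_t p_{ttt}\psi\dx$ (plus gradient analogues from the $-\Delta\Theta_t$-test), with $\psi\in\{\Theta_t,\Theta_{tt},\nabla\Theta_t\}$. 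For each I would H\"older-bound the $\psi$-factor in $L^2$ and absorb it into $\mathcal{D}[\Theta]$ on the left via Young's $\varepsilon$-inequality, while the pressure product is split either as $L^4\times L^4$ via Ladyzhenskaya \eqref{lady} (which, after Poincar\'e and the identifications $\|p_t\|_{H^1}^2\lesssim E_1[p]$ and $\|\Delta p_t\|_{L^2}^2\lesssim D_1[p]$, delivers contributions of the shape $E_1(D_1+D_2)$), or as $L^\infty\times L^2$ via Agmon \eqref{Agmon} together with elliptic regularity $\|p_t\|_{H^2}^2\lesssim E_2[p]$, yielding $\|p_t\|_{L^\infty}^2 \lesssim E_1^{1-d/4}E_2^{d/4}$ and therefore contributions of the shape $E_1^{1-d/4}E_2^{d/4}(D_1+D_2)$.

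The main obstacle will be the integral produced by $\tau\partial_{tt}\mathcal{Q}$ tested against $\Theta_{tt}$, and in particular the cross product $p_t p_{ttt}$, whose handling is precisely the reason $D_2[p]$ was designed to include $\|p_{ttt}\|_{L^2}^2$. Pairing $\|p_{ttt}\|_{L^2}$ with $\|p_t\|_{L^\infty}$ through Agmon gives
\[
\Big|\int p_t\, p_{ttt}\, \Theta_{tt}\dx\Big| \;\lesssim\; \varepsilon\|\Theta_{tt}\|_{L^2}^2 + C\, E_1^{1-d/4}E_2^{d/4}\, D_2,
\]
which accounts for exactly the second term in \eqref{main_estimate_energy_theta}. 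The $p_{tt}^2$ contribution and the gradient integrals arising after integration by parts in the $-\Delta\Theta_t$-test require a similarly delicate interpolation, and it is the flexibility built into the ``either $E_1$ or $E_1^{1-d/4}E_2^{d/4}$'' dichotomy in the stated bound that lets each of these pieces land on one of the two tracks.
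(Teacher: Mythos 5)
Your proposal is correct and follows essentially the same route as the paper: the multipliers $\Theta_t$, $\Theta$, $-\Delta\Theta$, $-\Delta\Theta_t$ on \eqref{bioheat_eq}, the time-differentiated equation tested with $\Theta_{tt}$ and $\Theta_t$ (the paper's Lemmas \ref{lemma1}--\ref{lemma2}), and the Ladyzhenskaya/Agmon interpolation to split the source terms between the $E_1(D_1+D_2)$ and $E_1^{1-d/4}E_2^{d/4}(D_1+D_2)$ tracks, with $\|p_{ttt}\|_{L^2}^2\subset D_2[p]$ pinned as the reason the $p_t p_{ttt}$ piece closes. You also correctly identified the elliptic identity for recovering $\|\Delta\Theta\|_{L^2}^2$ in the dissipation, matching the paper's proof of Proposition \ref{prop_theta}.
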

The proof of Proposition \ref{prop_theta} will be given later, and it is the result of Lemmas \ref{lemma1} and \ref{lemma2}.  
\begin{lemma} \label{lemma1} Let $\tau>0$. It holds that 
\begin{equation} \label{estimate_energy_1}
\begin{aligned}
 \mathcal{E}_0[\Theta](t)+ \int_0^t \mathcal{D}_0 [\Theta](s) \ds \lesssim &\, \mathcal{E}_0[\Theta](0)+ \int_0^t E_1[p](s) D_1[p](s) \ds 
\end{aligned}
\end{equation}
for all $t \geq 0$.

%\begin{equation} \label{estimate_energy_2}
%\begin{aligned}
%&\, \ddt \Big( \mathcal{E}_1[\Theta](t)+\tau m \intO \Theta_{tt} \Theta_t \dx \Big)+ \mathcal{D}_1 [\Theta](t) \\
% \leq &\, C_2\big(\| \partial_t \mathcal{Q}(p_t) \|_{L^2}^2 +\bar{\tau}^2 \| \partial_t^2 \mathcal{Q}(p_t) \|_{L^2}^2 \big).
%\end{aligned}
%\end{equation}
\end{lemma}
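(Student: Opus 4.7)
My plan is to test the bioheat equation \eqref{bioheat_eq} in $L^2(\Omega)$ with the multiplier $\Theta_t + \lambda\Theta$ for a sufficiently small $\lambda>0$, integrate by parts using the Dirichlet condition \eqref{homog_dirichlet}, and assemble a Lyapunov functional equivalent to $\mathcal{E}_0[\Theta]$ whose dissipation dominates $\mathcal{D}_0[\Theta]$. Testing with $\Theta_t$ alone should yield
\[
\ddt \Big[\tfrac{\tau m}{2}\|\Theta_t\|_{L^2}^2 + \tfrac{\ell}{2}\|\Theta\|_{L^2}^2 + \tfrac{\kappaa}{2}\|\nabla\Theta\|_{L^2}^2 \Big] + (m+\tau\ell)\|\Theta_t\|_{L^2}^2 = \intO (\mathcal{Q}(p_t)+\tau\partial_t\mathcal{Q}(p_t))\,\Theta_t \dx,
\]
while testing with $\Theta$ should produce
\[
\ddt \Big[\tau m\,(\Theta_t,\Theta)_{L^2} + \tfrac{m+\tau\ell}{2}\|\Theta\|_{L^2}^2 \Big] - \tau m \|\Theta_t\|_{L^2}^2 + \ell\|\Theta\|_{L^2}^2 + \kappaa\|\nabla\Theta\|_{L^2}^2 = \intO (\mathcal{Q}(p_t)+\tau\partial_t\mathcal{Q}(p_t))\,\Theta \dx.
\]
Adding $\lambda$ times the second identity to the first and controlling the cross term $\lambda\tau m(\Theta_t,\Theta)_{L^2}$ by Young's inequality, for $\lambda$ small enough to keep $(m+\tau\ell)-\lambda\tau m>0$, the resulting functional $\mathcal{F}(t)$ will be equivalent to $\mathcal{E}_0[\Theta](t)$ and its dissipative part will dominate $\mathcal{D}_0[\Theta](t)$.

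\textbf{Nonlinear right-hand side.} With $\mathcal{Q}(p_t) = \tfrac{2b}{\rhoa c_a^4} p_t^2$ and $\partial_t\mathcal{Q}(p_t) = \tfrac{4b}{\rhoa c_a^4} p_t p_{tt}$, four integrals must be bounded. By H\"older's inequality and the embedding $H^1_0(\Omega)\hookrightarrow L^4(\Omega)$ I will use
\[
\intO p_t^2\,\Theta_t\dx \lesssim \|\nabla p_t\|_{L^2}^2\,\|\Theta_t\|_{L^2}, \qquad \intO p_t\,p_{tt}\,\Theta_t\dx \lesssim \|\nabla p_t\|_{L^2}\,\|\nabla p_{tt}\|_{L^2}\,\|\Theta_t\|_{L^2},
\]
together with analogous bounds for the two $\Theta$-variants, the $\varepsilon\|\Theta\|_{L^2}^2$ terms being absorbed into $\kappaa\|\nabla\Theta\|_{L^2}^2$ via Poincar\'e's inequality. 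Young's inequality then separates small multiples of $\mathcal{D}_0[\Theta]$ from quartic $p$-expressions of the form $\|\nabla p_t\|^4$ and $\|\nabla p_t\|^2\|\nabla p_{tt}\|^2$. Since $p_t\in H^1_0(\Omega)$ (a consequence of the Dirichlet condition on $p$), Poincar\'e's inequality supplies $\|\nabla p_t\|_{L^2}^2\lesssim\|\Delta p_t\|_{L^2}^2\lesssim D_1[p]$, while the definitions of $E_1[p]$ and $D_1[p]$ directly give $\|\nabla p_t\|^2\lesssim E_1[p]$ and $\|\nabla p_{tt}\|^2\lesssim D_1[p]$. Hence both quartic quantities are controlled by $E_1[p]\,D_1[p]$.

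\textbf{Conclusion and main obstacle.} Choosing the Young parameters small enough to absorb the $\varepsilon\mathcal{D}_0[\Theta]$ contributions into the dissipative side produces the differential inequality
\[
\ddt \mathcal{F}(t) + c\,\mathcal{D}_0[\Theta](t) \lesssim E_1[p](t)\,D_1[p](t),
\]
after which integration on $(0,t)$ together with $\mathcal{F}(t)\sim\mathcal{E}_0[\Theta](t)$ delivers \eqref{estimate_energy_1}. The main obstacle I anticipate is the simultaneous calibration of the single parameter $\lambda$: it must be small enough for $\mathcal{F}$ to be coercive over $\mathcal{E}_0[\Theta]$ (via Young applied to the cross term $\lambda\tau m(\Theta_t,\Theta)_{L^2}$) and for $(m+\tau\ell)-\lambda\tau m$ to remain positive, while still being large enough to produce the genuine $\lambda\ell\|\Theta\|_{L^2}^2$ dissipation required in $\mathcal{D}_0[\Theta]$. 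The structural feature making the right-hand side work is that each quartic quantity in $p$ factors as $(\text{a norm bounded by }E_1[p])\cdot(\text{a norm dissipated by }D_1[p])$, which is precisely what Poincar\'e's inequality applied to $p_t$ enables.
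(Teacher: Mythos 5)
Your proposal is correct and takes essentially the same route as the paper: both derive the two identities from testing \eqref{bioheat_eq} with $\Theta_t$ and with $\Theta$, bound the source terms via H\"older, the embedding $H^1_0 \hookrightarrow L^4$, Poincar\'e, and Young so that the quartic $p$-expressions factor as $E_1[p]\,D_1[p]$, and then combine the two estimates with a carefully sized positive weight to obtain coercivity. The only cosmetic difference is that you package the combination as a Lyapunov functional with multiplier $\Theta_t+\lambda\Theta$ and a differential inequality, whereas the paper integrates each tested identity in time first and then adds $\tfrac{\ell}{2m}$ times the $\Theta$-estimate — the paper's $\tfrac{\ell}{2m}$ plays exactly the role of your $\lambda$.
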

\begin{proof}
We multiply the bioheat equation \eqref{bioheat_eq} by $\Theta_t$ and integrate over space,  using integration by parts, we obtain the identity
\begin{equation} 
\begin{aligned}
\frac{1}{2}\ddt \big(\tau m \| \Theta_t \|_{L^2}^2+\ell \| \Theta \|_{L^2}^2 &\,+\kappaa \| \nabla \Theta \|_{L^2}^2 \big)+ (m +\tau \ell) \| \Theta_t \|_{L^2}^2\\
&\,= \intO (\mathcal{Q}(p_t) +\tau \partial_t \mathcal{Q}(p_t)) \Theta_t \dx.
\end{aligned}
\end{equation}
Applying H\"{o}lder and Young inequalities, we get
\begin{equation} \label{ineq1}
\begin{aligned}
&\, \ddt \big(\tau m \| \Theta_t \|_{L^2}^2+\ell \| \Theta \|_{L^2}^2  +\kappaa \| \nabla \Theta \|_{L^2}^2 \big)+ (m +\tau \ell) \| \Theta_t \|_{L^2}^2\\
 \leq &\, \frac{1}{2 m}\big(\| \mathcal{Q}(p_t) \|_{L^2}^2 +\tau^2 \| \partial_t \mathcal{Q}(p_t) \|_{L^2}^2 \big)+ \frac{m}{2}\| \Theta_t \|_{L^2}^2.
\end{aligned}
\end{equation}
The last term on the right-hand side can be absorbed by the dissipative term on the left. Thus, integrating with respect to time, it follows that
\begin{equation} \label{energy_1_est_1}
\begin{aligned}
&\, \tau m \| \Theta_t(t) \|_{L^2}^2+\ell \| \Theta (t)\|_{L^2}^2  +\kappaa \| \nabla \Theta(t)  \|_{L^2}^2 + \int_0^t(\frac{m}{2} +\tau \ell) \| \Theta_t(s) \|_{L^2}^2 \ds\\
 \leq &\, \mathcal{E}_0[\Theta](0)+ \frac{1}{2 m} \int_0^t \big(\| \mathcal{Q}(p_t) \|_{L^2}^2 +\tau^2 \| \partial_t \mathcal{Q}(p_t) \|_{L^2}^2 \big) \ds. 
\end{aligned}
\end{equation}
Next, we test the equation \eqref{bioheat_eq} by $\Theta$ and  integrate with respect to $x$, using  integrating by parts, we find
\begin{equation} \label{identity_theta}
\begin{aligned}
&\, \frac{m+ \tau \ell}{2}\ddt  \| \Theta \|_{L^2}^2 + \ell \| \Theta \|_{L^2}^2+\kappaa \| \nabla \Theta \|_{L^2}^2\\
=&\, -\tau m \ddt\left( \intO \Theta_{t} \Theta \dx\right)+ \tau m \| \Theta_t \|_{L^2}^2+ \intO (\mathcal{Q}(p_t) +\tau \partial_t \mathcal{Q}(p_t)) \Theta \dx
\end{aligned}
\end{equation}
where we have relied on the relation
\begin{equation}
\tau m \ddt \left( \intO \Theta_{t} \Theta \dx\right)= \tau m \intO \Theta_{tt} \Theta \dx+ \tau m \intO \vert \Theta_{t} \vert^2 \dx. 
\end{equation}
Then, we can estimate the right-hand side of \eqref{identity_theta}  as follows
\begin{equation} \label{ineq2}
\begin{aligned}
&\, \frac{m+ \tau \ell}{2}\ddt \| \Theta \|_{L^2}^2 + \ell \| \Theta \|_{L^2}^2+\kappaa \| \nabla \Theta \|_{L^2}^2\\
\leq &\,   -\tau m \ddt\left( \intO \Theta_{t} \Theta \dx\right)+ \tau m \| \Theta_t \|_{L^2}^2 \\
&\,+ \frac{1}{2\ell} \big( \| \mathcal{Q}(p_t) \|_{L^2}^2 +\tau^2 \| \partial_t \mathcal{Q}(p_t) \|_{L^2}^2 \big)+ \frac{\ell}{2}\| \Theta \|_{L^2}^2.
\end{aligned}
\end{equation}
Observe that the last term on the right can be absorbed by the dissipation on the left of \eqref{ineq2}. Further, integrating from $0$ to $t$, we obtain  
\begin{equation}  
\begin{aligned}
&\, \frac{m+ \tau \ell}{2}\| \Theta(t) \|_{L^2}^2 + \int_0^t \Big(\frac{\ell}{2} \| \Theta (s)\|_{L^2}^2+\kappaa \| \nabla \Theta (s) \|_{L^2}^2 \Big) \ds\\
\leq &\,  \frac{m+ \tau \ell}{2}\| \Theta_0 \|_{L^2}^2+  \tau m \intO \vert \Theta_{t}(t)  \Theta (t)  \vert \dx+ \tau m \intO \vert \Theta_{1}  \Theta_0  \vert \dx \\
&\, +  \int_0^t \tau m \| \Theta_t(s) \|_{L^2}^2 \ds+ \frac{1}{2\ell} \int_0^t \big( \| \mathcal{Q}(p_t) \|_{L^2}^2 +\tau^2 \| \partial_t \mathcal{Q}(p_t) \|_{L^2}^2 \big) \ds.
\end{aligned}
\end{equation}
Now,  using Young's inequality gives the bound
\begin{equation}  
\begin{aligned}
&\, \frac{m+ \tau \ell}{2}\| \Theta(t) \|_{L^2}^2 + \int_0^t \Big(\frac{\ell}{2} \| \Theta (s)\|_{L^2}^2+\kappaa \| \nabla \Theta (s) \|_{L^2}^2 \Big) \ds\\
\leq &\, \frac{m+ \tau \ell}{2} \| \Theta_0 \|_{L^2}^2+  \frac{\tau m^2}{\ell} \Vert \Theta_{t}(t) \Vert_{L^2}^2+ \frac{\tau \ell}{4}\Vert  \Theta(t)  \Vert_{L^2}^2 + \frac{\tau m}{2}\big(  \Vert \Theta_{1} \Vert_{L^2}^2+ \Vert  \Theta_0  \Vert_{L^2}^2 \big)\\
&\,+ \int_0^t \tau m \| \Theta_t(s) \|_{L^2}^2 \ds+ \frac{1}{2\ell} \int_0^t \big( \| \mathcal{Q}(p_t) \|_{L^2}^2 +\tau^2 \| \partial_t \mathcal{Q}(p_t) \|_{L^2}^2 \big) \ds.
\end{aligned}
\end{equation}
Clearly, the third term on the right of the above estimate can be absorbed by the first term on the left, so that we obtain
\begin{equation} \label{energy_1_est_2}
\begin{aligned}
&\, \big(\frac{m}{2}+\frac{\tau \ell}{4} \big)\| \Theta(t) \|_{L^2}^2 + \int_0^t \Big( \frac{\ell}{2} \| \Theta (s)\|_{L^2}^2+\kappaa \| \nabla \Theta (s) \|_{L^2}^2 \Big) \ds\\ 
\leq &\, \frac{m+ \tau (m+\ell)}{2} \big(  \Vert \Theta_{0} \Vert_{L^2}^2+ \Vert  \Theta_1  \Vert_{L^2}^2 \big)+  \frac{\tau m^2}{\ell} \Vert \Theta_{t}(t) \Vert_{L^2}^2 + \int_0^t \tau m \| \Theta_t(s) \|_{L^2}^2 \ds\\
&\,+ \frac{1}{2\ell} \int_0^t \big( \| \mathcal{Q}(p_t) \|_{L^2}^2 +\tau^2 \| \partial_t \mathcal{Q}(p_t) \|_{L^2}^2 \big) \ds.
\end{aligned}
\end{equation}
In order to absorb  the second and third term on the right-hand side of \eqref{energy_1_est_2}, we sum up the estimates \eqref{energy_1_est_1} and $\frac{\ell}{2m} \times$\eqref{energy_1_est_2},  then we have for all $t \geq 0$  
\begin{equation} \label{estimate_theta_1}
\begin{aligned}
&\, \frac{\ell}{2m} \big(\frac{m}{2}+\frac{\tau \ell}{4} \big)\| \Theta(t) \|_{L^2}^2 + \ell \| \Theta (t)\|_{L^2}^2 + \frac{\tau m}{2} \| \Theta_t(t) \|_{L^2}^2+\kappaa \| \nabla \Theta(t)  \|_{L^2}^2 \\
&\, + \frac{\ell}{2m} \int_0^t \Big( \frac{\ell}{2} \| \Theta (s)\|_{L^2}^2+\kappaa \| \nabla \Theta (s) \|_{L^2}^2 \Big) \ds+ \int_0^t(\frac{m}{2} +\frac{\tau \ell}{2}) \| \Theta_t(s) \|_{L^2}^2 \ds\\ 
\lesssim &\,  \mathcal{E}_0[\Theta](0)+  \int_0^t  \big( \| \mathcal{Q}(p_t) \|_{L^2}^2 + \| \partial_t \mathcal{Q}(p_t) \|_{L^2}^2 \big) \ds.
%\lesssim &\,   \mathcal{E}_0[\Theta](0)+  \int_0^t \big( \| \mathcal{Q}(p_t) \|_{L^2}^2 + \| \partial_t \mathcal{Q}(p_t) \|_{L^2}^2 \big) \ds  .
\end{aligned}
\end{equation}

Recalling the formula for $\mathcal{Q}$ in  \eqref{Q_definition}, we have
\begin{equation}
\begin{aligned}
 \| \mathcal{Q}(p_t) \|_{L^2}^2 + \| \partial_t \mathcal{Q}(p_t) \|_{L^2}^2 \lesssim &\, \| (p_t)^2 \|_{L^2}^2 + \| p_t p_{tt} \|_{L^2}^2 \\
 \lesssim  &\, \| p_t \|_{L^4}^2 \| p_t \|_{L^4}^2 + \| p_t \|_{L^4}^2 \| p_{tt} \|_{L^4}^2\\
% \lesssim  &\, \| \nabla p_t \|_{L^2}^2 ( \| \nabla p_t \|_{L^2}^2 + \| p_{tt} \|_{L^2}^{2-\frac{d}{2}} \| p_{tt} \|_{H^1}^{\frac{d}{2}})
 \lesssim  &\, \| \nabla p_t \|_{L^2}^2 ( \| \Delta p_t \|_{L^2}^2 +   \| \nabla p_{tt} \|_{L^2}^2 )
\end{aligned}
\end{equation}
where we have used the embedding $H^1(\Om) \hookrightarrow L^4(\Om)$ and the inequalities 
\begin{equation}
\| p_t \|_{H^1} \lesssim \| p_t \|_{H^2} \lesssim \| \Delta p_t \|_{L^2}.
\end{equation}
Integrating over $(0, t)$, we obtain
\begin{equation} \label{estimate_Q}
\begin{aligned}
\int_0^t \big( \| \mathcal{Q}(p_t) \|_{L^2}^2 + \| \partial_t \mathcal{Q}(p_t) \|_{L^2}^2 \big) \ds \lesssim &\, \int_0^t E_1[p](s) D_1[p](s) \ds.
\end{aligned}
\end{equation}
Thus, combining \eqref{estimate_theta_1} and \eqref{estimate_Q}, we end up with estimate \eqref{estimate_energy_1}, thereby concluding the proof of Lemma \ref{lemma1}.
\end{proof}

Next, we obtain some extra estimates for higher-order norms of $\Theta$ that will be useful in the sequel.  
  
\begin{lemma} \label{lemma2} Let $\tau >0$. Then for all $t \geq 0$, it holds that 
\begin{equation} \label{estimate_energy_2}
\begin{aligned}
&\, \mathcal{E}_1[\Theta](t)+ \int_0^t \mathcal{D}_1 [\Theta](s) \ds \\ 
\lesssim  &\,  \mathcal{E}_1[\Theta](0)+ \int_0^t  \Big( E_1[p]+  (E_1[p])^{1- \frac{d}{4}} (E_2[p])^{ \frac{d}{4}} \Big)\big( D_1[p] + D_2[p]\big) \ds .
\end{aligned}
\end{equation}
The hidden constant in \eqref{estimate_energy_2} does not depend on $t$.
\end{lemma}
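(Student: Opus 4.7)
The plan is to mirror the proof of Lemma~\ref{lemma1} one derivative higher. Differentiating \eqref{bioheat_eq} in time (which is legal thanks to the regularity $\Theta\in X_\Theta$) yields
\begin{equation*}
\tau m\, \Theta_{ttt} + (m+\tau\ell)\Theta_{tt} + \ell\, \Theta_t - \kappaa \Delta \Theta_t = \partial_t \mathcal{Q}(p_t) + \tau\, \partial_t^2 \mathcal{Q}(p_t),
\end{equation*}
with $\Theta_t\vert_{\partial\Omega}=0$ inherited from \eqref{homog_dirichlet}. First I would test this equation with $\Theta_{tt}$ and integrate by parts in the Laplacian term, obtaining an identity whose leading part is $\tfrac{\textup d}{\textup d t}\bigl(\tfrac{\tau m}{2}\|\Theta_{tt}\|_{L^2}^2 + \tfrac{\ell}{2}\|\Theta_t\|_{L^2}^2 + \tfrac{\kappaa}{2}\|\nabla\Theta_t\|_{L^2}^2\bigr) + (m+\tau\ell)\|\Theta_{tt}\|_{L^2}^2$, with the two forcing terms on the right handled by H\"older and Young, absorbing $\frac{m}{2}\|\Theta_{tt}\|_{L^2}^2$ into the dissipation.

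Next, I would test the differentiated equation with $\Theta_t$ to introduce $\ell\|\Theta_t\|_{L^2}^2 + \kappaa\|\nabla \Theta_t\|_{L^2}^2$ as a genuine dissipative lower bound, paying for it the correction $-\tau m\,\tfrac{\textup d}{\textup d t}\int_\Omega \Theta_{tt}\Theta_t\dx + \tau m\|\Theta_{tt}\|_{L^2}^2$ exactly as was done for the identity \eqref{identity_theta}. After time integration, the boundary-in-time correction $\tau m\!\int_\Omega \Theta_{tt}(t)\Theta_t(t)\dx$ is Young-absorbed by a small fraction of $\tfrac{\tau m}{2}\|\Theta_{tt}(t)\|_{L^2}^2$ from the first estimate, and forming the combination (first estimate) $+\ \tfrac{\ell}{2m}\times$(second estimate) then lets $\tau m\|\Theta_{tt}\|_{L^2}^2$ under the integral be swallowed by the dissipative term $(m+\tau\ell)\|\Theta_{tt}\|_{L^2}^2$, reproducing structurally the $\mathcal{E}_1$–$\mathcal{D}_1$ pair from \eqref{heat_energy_1}–\eqref{heat_dissip_1}.

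The main obstacle, and the reason the right-hand side of \eqref{estimate_energy_2} contains the mixed power $(E_1[p])^{1-d/4}(E_2[p])^{d/4}$, lies in controlling
\begin{equation*}
\|\partial_t\mathcal{Q}(p_t)\|_{L^2}^2 + \|\partial_t^2\mathcal{Q}(p_t)\|_{L^2}^2 \lesssim \|p_t p_{tt}\|_{L^2}^2 + \|p_{tt}\|_{L^4}^4 + \|p_t p_{ttt}\|_{L^2}^2,
\end{equation*}
where the two last terms are new. For $\|p_{tt}\|_{L^4}^4$, I factor it as $\|p_{tt}\|_{L^4}^2\cdot\|p_{tt}\|_{L^4}^2$, applying Ladyzhenskaya \eqref{lady} to one factor to get $(E_1[p])^{1-d/4}(E_2[p])^{d/4}$ (using $\|p_{tt}\|_{L^2}^2\lesssim E_1[p]$ and $\|p_{tt}\|_{H^1}^2\lesssim \|\nabla p_{tt}\|_{L^2}^2\lesssim E_2[p]$), while the other factor is absorbed by the embedding $H^1\hookrightarrow L^4$ into $\|\nabla p_{tt}\|_{L^2}^2\lesssim D_1[p]$. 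For $\|p_t p_{ttt}\|_{L^2}^2\leq \|p_t\|_{L^\infty}^2\|p_{ttt}\|_{L^2}^2$, I invoke Agmon's inequality \eqref{Agmon} to write $\|p_t\|_{L^\infty}^2\lesssim \|p_t\|_{L^2}^{2-d/2}\|p_t\|_{H^2}^{d/2}\lesssim (E_1[p])^{1-d/4}(E_2[p])^{d/4}$, where the $H^2$ bound uses $\|\Delta p_t\|_{L^2}^2\lesssim E_2[p]$, and then $\|p_{ttt}\|_{L^2}^2\leq D_2[p]$. The first term $\|p_t p_{tt}\|_{L^2}^2\lesssim E_1[p]\cdot D_1[p]$ is controlled exactly as in Lemma~\ref{lemma1}. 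Summing these contributions produces the asserted bound $(E_1[p] + (E_1[p])^{1-d/4}(E_2[p])^{d/4})(D_1[p] + D_2[p])$, with all constants independent of $t$.
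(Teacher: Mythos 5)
Your proposal is correct and reproduces the paper's own proof: differentiate the bioheat equation in time, test with $\Theta_{tt}$ and then with $\Theta_t$, combine exactly as for $\mathcal{E}_0[\Theta]$ in Lemma~\ref{lemma1}, and handle the new source terms $\|p_{tt}\|_{L^4}^4$ and $\|p_t p_{ttt}\|_{L^2}^2$ via Ladyzhenskaya and Agmon to extract the mixed power $(E_1[p])^{1-d/4}(E_2[p])^{d/4}$. The bookkeeping of where each factor lands ($E_1$, $E_2$, $D_1$, $D_2$) matches the paper's estimate term by term.
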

\begin{proof} By taking the time derivative of the equation  \eqref{bioheat_eq}, we can see that the function $\tilde{\Theta}:= \Theta_t$ satisfies in $L^2(0, T; H^{-1}(\Om))$ the same wave equation as $\Theta$ with a different source term
\begin{equation} \label{theta_t_eq}
\tau m \tilde{\Theta}_{tt} +(m + \tau \ell) \tilde{\Theta}_t +\ell \tilde{\Theta}- \kappaa \Delta \tilde{\Theta}= \partial_t \mathcal{Q}(p_t) +\tau \partial_t^2 \mathcal{Q}(p_t). 
\end{equation}
Thus to establish the bound \eqref{estimate_energy_2}, we test the equation \eqref{theta_t_eq} by  $\tilde{\Theta}_t, \tilde{\Theta}$ and go through exactly the same steps as in the proof of Lemma \ref{lemma1}, only the source term changes. This will lead to a similar estimate to \eqref{estimate_theta_1} for the function $\tilde{\Theta}$
\begin{equation} \label{estimate_theta_tilde}
\begin{aligned}
&\, \frac{\ell}{2m} \big(\frac{m}{2}+\frac{\tau \ell}{4} \big)\| \tilde{\Theta}(t) \|_{L^2}^2 + \ell \| \tilde{\Theta} (t)\|_{L^2}^2 + \frac{\tau m}{2} \| \tilde{\Theta}_t(t) \|_{L^2}^2+\kappaa \| \nabla \tilde{\Theta}(t)  \|_{L^2}^2 \\
&\, + \frac{\ell}{2m} \int_0^t \Big(\frac{\ell}{2} \| \tilde{\Theta} (s)\|_{L^2}^2+\kappaa \| \nabla \tilde{\Theta} (s) \|_{L^2}^2 \Big) \ds+ \int_0^t\big(\frac{m}{2} +\frac{\tau \ell}{2}\big) \| \tilde{\Theta}_t(s) \|_{L^2}^2 \ds\\ 
\lesssim &\,   \mathcal{E}_0[\tilde{\Theta}](0)+  \int_0^t \big( \|  \partial_t\mathcal{Q}(p_t) \|_{L^2}^2 + \| \partial_t^2 \mathcal{Q}(p_t) \|_{L^2}^2 \big) \ds.
\end{aligned}
\end{equation}
Further, we have by using the definition of $\mathcal{Q}$ in \eqref{Q_definition}
\begin{equation}
\begin{aligned}
 \|  \partial_t\mathcal{Q}(p_t) \|_{L^2}^2 + \| \partial_t^2 \mathcal{Q}(p_t) \|_{L^2}^2 \lesssim &\, \| p_t p_{tt} \|_{L^2}^2 +\| (p_{tt})^2+ p_t p_{ttt} \|_{L^2}^2  \\
 \lesssim &\, \| p_t p_{tt} \|_{L^2}^2 +\| (p_{tt})^2 \|_{L^2}^2 + \| p_t p_{ttt} \|_{L^2}^2  \\
 \lesssim  &\, \| p_t \|_{L^4}^2 \| p_{tt} \|_{L^4}^2 + \| p_{tt} \|_{L^4}^4+ \| p_t \|_{L^\infty}^2 \| p_{ttt} \|_{L^2}^2.
\end{aligned}
\end{equation}
From here, using the interpolation inequalities \eqref{lady} and \eqref{Agmon} together with the embedding $H^1(\Om) \hookrightarrow L^4(\Om)$, elliptic regularity, Poincar\'{e}'s inequality and \eqref{degeneracy}, we get the following bound
\begin{equation}
\begin{aligned}
& \|  \partial_t\mathcal{Q}(p_t) \|_{L^2}^2 + \| \partial_t^2 \mathcal{Q}(p_t) \|_{L^2}^2\\
  \lesssim &\, \| \nabla p_t \|_{L^2}^2 \| \nabla p_{tt} \|_{L^2}^2 +  \| p_{tt} \|_{L^2}^{2-\frac{d}{2}} \| p_{tt} \|_{H^1}^{\frac{d}{2}} \| \nabla p_{tt} \|_{L^2}^2+ \| p_{t} \|_{L^2}^{2-\frac{d}{2}}  \| p_{t} \|_{H^2}^{\frac{d}{2}} \| p_{ttt} \|_{L^2}^2\\
 \lesssim &\, \big( D_1[p] + D_2[p]\big) \Big( E_1[p]+ (E_1[p])^{1- \frac{d}{4}}(E_2[p])^{ \frac{d}{4}} \Big). 
 \end{aligned}
\end{equation}
Inserting this bound into \eqref{estimate_theta_tilde} and recalling that $\tilde{\Theta}= \Theta_t$ which means $\mathcal{E}_0[\tilde{\Theta}]= \mathcal{E}_1[\Theta]$, we reach \eqref{estimate_energy_2}; thus completing the proof of Lemma \ref{lemma2}.
\end{proof}

At this point, we are ready to proceed with the proof of Proposition \ref{prop_theta}. We remark though, that with the smoothness displayed by the solution $\Theta$, we can only perform the computations below in a formal manner. To get around this, first we deal with more regular solutions, then by a density argument we acquire the same estimates for the solution $\Theta \in X_\Theta$.

\begin{proof}[Proof of Proposition \ref{prop_theta}]
We test the equation \eqref{bioheat_eq} by $-\Delta \Theta$ and we integrate over $\Om$ to find
\begin{equation}
\kappaa \| \Delta \Theta \|^2_{L^2}= \intO (-\tau m \Theta_{tt} -(m +\tau \ell) \Theta_{t}-\ell \Theta +\mathcal{Q}(p_t)+ \tau \partial_t \mathcal{Q}(p_t))(-\Delta \Theta) \dx.
\end{equation}
The right-hand side can be estimated by using Young's inequality as follows: 
\begin{equation}
\begin{aligned}
\kappaa \| \Delta \Theta \|^2_{L^2} \leq &\, \frac{1}{2 \kappaa}\Big( \ell^2 \| \Theta \|^2_{L^2}+(m+\tau \ell)^2 \| \Theta_t \|^2_{L^2} +\tau^2 m^2 \| \Theta_{tt} \|^2_{L^2} \\
&\,+ \| \mathcal{Q}(p_t) \|_{L^2}^2+\tau^2 \| \partial_t \mathcal{Q}(p_t) \|_{L^2}^2  \Big)+ \frac{\kappaa}{2} \| \Delta \Theta \|^2_{L^2},
\end{aligned}
\end{equation}
which together with \eqref{estimate_energy_1}, \eqref{estimate_energy_2} and \eqref{estimate_Q}, implies 
\begin{equation} \label{estimate_delta_theta}
\begin{aligned}
&\, \frac{\kappaa}{2}  \int_0^t \| \Delta \Theta (s) \|^2_{L^2} \ds\\ \lesssim &\, \int_0^t \big(\mathcal{D}_0[\Theta](s)+\mathcal{D}_1[\Theta](s) \big) \ds + \int_0^t  \big(\| \mathcal{Q}(p_t) \|_{L^2}^2+ \| \partial_t \mathcal{Q}(p_t) \|_{L^2}^2 \big) \ds\\
\lesssim &\,   \mathcal{E}_0[\Theta](0)+\mathcal{E}_1[\Theta](0)+\int_0^t   \Big( E_1[p]+(E_1[p])^{1- \frac{d}{4}}(E_2[p])^{ \frac{d}{4}} \Big) \big(D_1[p]+D_2[p] \big) \ds .  
\end{aligned}
\end{equation}
Next, we multiply the bioheat equation \eqref{bioheat_eq} by $-\Delta \Theta_t$ and integrate in space, using integration by parts yields the identity
\begin{equation} \label{identity_Delta_theta}
\begin{aligned}
&\, \frac{1}{2}  \ddt   \big( \tau m \| \nabla \Theta_t \|_{L^2}^2 + \kappaa \| \Delta \Theta \|_{L^2}^2 \big) + (m+ \tau \ell) \| \nabla \Theta_t \|_{L^2}^2 \\
=&\,- \ell \intO \nabla \Theta \cdot \nabla \Theta_t \dx +\intO  \nabla ( \mathcal{Q}(p_t)+\tau \partial_t \mathcal{Q}(p_t)) \cdot \nabla \Theta_t.
\end{aligned}
\end{equation}
We note that the available regularity for the local-in-time solution $(p, \Theta) \in X_p \times X_\Theta$ makes the computations leading to the identity \eqref{identity_Delta_theta} and the estimates below, only formal. However, this could be justified by first establishing the bounds for smoother solutions to the equation  \eqref{bioheat_eq}, whose existence is provided in \cite[Theorem 6, p. 391]{evans2010partial}. Hence, by an approximation argument like the one presented in \cite[Proposition 2.1]{LasieckaTataru}, we can recover the same estimate \eqref{delta_theta_est_1} below for the solution $\Theta \in X_\Theta$.\\
Applying Young's inequality to the right-hand side of \eqref{identity_Delta_theta}, we obtain
\begin{equation} \label{delta_theta_est}
\begin{aligned}
&\, \frac{1}{2}  \ddt   \big( \tau m \| \nabla \Theta_t \|_{L^2}^2 + \kappaa \| \Delta \Theta \|_{L^2}^2 \big) + (m+ \tau \ell) \| \nabla \Theta_t \|_{L^2}^2 \\
\leq &\, \frac{\ell^2}{m}  \| \nabla \Theta \|_{L^2}^2  + \frac{1}{m} \| \nabla ( \mathcal{Q}(p_t)+\tau \partial_t \mathcal{Q}(p_t)) \|_{L^2}^2 + \frac{m}{2} \| \nabla \Theta_t \|_{L^2}^2.
\end{aligned}
\end{equation} 
The second term on the right-hand side can be bounded as follows
\begin{equation}
\begin{aligned}
 \| \nabla ( \mathcal{Q}(p_t)+\tau \partial_t \mathcal{Q}(p_t)) \|_{L^2}^2 \lesssim &\,   \| \nabla ( \mathcal{Q}(p_t) ) \|_{L^2}^2 + \| \nabla ( \partial_t \mathcal{Q}(p_t)) \|_{L^2}^2 \\
\lesssim &\, \| 2 p_t \nabla p_t \|_{L^2}^2 + \| 2 p_{tt} \nabla p_t + 2 p_{t} \nabla p_{tt} \|_{L^2}^2  \\
\lesssim &\,  \|  p_t \nabla p_t \|_{L^2}^2 + \| p_{tt} \nabla p_t \|_{L^2}^2 + \|  p_{t} \nabla p_{tt} \|_{L^2}^2 \\
\lesssim &\,  \|  p_t  \|_{L^4}^2  \| \nabla p_t \|_{L^4}^2 + \| p_{tt} \|_{L^4}^2 \| \nabla p_t \|_{L^4}^2 + \|  p_{t} \|_{L^\infty}^2 \| \nabla p_{tt} \|_{L^2}^2 .\\
\end{aligned}
\end{equation} 
By using the interpolation inequalities \eqref{lady}, \eqref{Agmon}, the embedding $H^1(\Om) \hookrightarrow L^4(\Om)$, elliptic regularity and \eqref{degeneracy}, we can further estimate the right-hand side of the above inequality
\begin{equation} \label{est_gradient_Q}
\begin{aligned}
\| \nabla ( \mathcal{Q}(p_t)+\tau \partial_t \mathcal{Q}(p_t)) \|_{L^2}^2 \lesssim &\,   \| \nabla p_t  \|_{L^2}^{2} \| \Delta p_t \|_{L^2}^2 + \| p_{tt} \|_{L^2}^{2-\frac{d}{2}} \| \nabla p_{tt} \|_{L^2}^{\frac{d}{2}} \| \Delta p_t \|_{L^2}^2 \\
&+  \|  p_t  \|_{L^2}^{2-\frac{d}{2}} \| \Delta p_t  \|_{L^2}^{\frac{d}{2}}  \| \nabla p_{tt} \|_{L^2}^2\\
\lesssim &\, \Big( E_1[p]+ E_1[p]^{1-\frac{d}{4}} E_2[p]^{\frac{d}{4}} \Big) D_1[p].
\end{aligned}
\end{equation} 
Plugging \eqref{est_gradient_Q} into \eqref{delta_theta_est} and integrating in time, we find
\begin{equation} \label{delta_theta_est_1}
\begin{aligned}
&\, \frac{1}{2}  \big( \tau m \| \nabla \Theta_t(t) \|_{L^2}^2 + \kappaa \| \Delta \Theta(t) \|_{L^2}^2 \big) + (\frac{m}{2}+ \tau \ell)\int_0^t \| \nabla \Theta_t(s) \|_{L^2}^2 \ds \\
\lesssim &\, \mathcal{E}[\Theta](0)+ \frac{\ell^2}{m} \int_0^t \| \nabla \Theta(s) \|_{L^2}^2 \ds +  \int_0^t \Big( E_1[p]+ (E_1[p])^{1-\frac{d}{4}}  (E_2[p])^{\frac{d}{4}} \Big) D_1[p]\ds .
\end{aligned}  
\end{equation}
Now we add $\lambda \times$\eqref{delta_theta_est_1} to \eqref{estimate_energy_1} where we suitably select $\lambda>0$ in order to absorb the second term on the right of \eqref{delta_theta_est_1} by the left-hand side of \eqref{estimate_energy_1}. Then we sum up the resulting estimate,  \eqref{estimate_energy_2} and \eqref{estimate_delta_theta}. This leads to the estimate \eqref{main_estimate_energy_theta} for the total energy $\mathcal{E}[\Theta]$, which ends the proof of Proposition \ref{prop_theta}. 
%This together with the bound  \eqref{estimate_energy_theta} leads to 
%\begin{equation} \label{}
%\begin{aligned}
%   &\, \mathcal{E}[\Theta](t)+ \int_0^t  \mathcal{D}[\Theta](s) \ds+\kappaa^2 \int_0^t \| \Delta \Theta (s) \|^2_{L^2} \ds\\ \lesssim &\,  (1+ \bar{\tau}+\bar{\tau}^2) \Big(\mathcal{E}[\Theta](0) + \int_0^t  \big( \| \mathcal{Q}(p_t) \|_{L^2}^2 + \| \partial_t \mathcal{Q}(p_t) \|_{L^2}^2+   \| \partial_t^2 \mathcal{Q}(p_t) \|_{L^2}^2 \big) \ds \Big).
%\end{aligned}
%\end{equation}
\end{proof}
\subsection{The acoustic pressure equation} 
Our focus in this section  is to prove some uniform in time energy estimates for the acoustic pressure. Relying on the smoothness exhibited by the local-in-time solution $(p,\Theta) \in X_p \times X_\Theta$ and assuming \eqref{degeneracy}, we are able to show the estimate stated in the following proposition.
\begin{proposition} \label{prop_pressure} The local-in-time solution to \eqref{Main_system} $(p, \Theta)$ satisfies the estimate
%\begin{equation} \label{}
%\begin{aligned} 
%   &E_{\mathsf{low}}[p](t) +\, \int_0^t D_{\mathsf{low}}[p](s) \ds  \\
%\lesssim&\,  E_{\mathsf{low}}[p](0)+ \,\int_0^t \Big( 1 +\mathcal{E}[\Theta] + \mathcal{E}[\Theta]^{2+ \gamma_2}+\mathcal{E}[\Theta]^{ 1+ \gamma_1 } \Big) \\
%&\qquad\times \Big( E_{\mathsf{low}}[p](t)+\big(\mathcal{E}_0[\Theta] \big)^{1-\frac{d}{4}} \big(\mathcal{E}[\Theta] \big)^{\frac{d}{4}} \Big)\Big( D_{\mathsf{low}}[p] +   \mathcal{D}_0[\Theta]\Big) \ds,
%\end{aligned}
%\end{equation} 
%and 
\begin{equation} \label{main_est_energy_p}
\begin{aligned} 
  & E_1[p] (t) + E_2[p](t)+   \int_0^t (D_1[p](s)+D_2[p](s))  \ds  \\
\lesssim &\, E_1[p](0)+ E_2[p](0)+  \int_0^t \Big( 1 +\mathcal{E}[\Theta] + \mathcal{E}[\Theta]^{2+ \gamma_2}+\mathcal{E}[\Theta]^{ 1+ \gamma_1 } \Big) \\
&\, \qquad \times \big( E_1[p]+ (E_1[p])^{1-\frac{d}{4}}(E_2[p])^{\frac{d}{4}} +\big(\mathcal{E}_0[\Theta] \big)^{1-\frac{d}{4}} \big(\mathcal{E}[\Theta] \big)^{\frac{d}{4}} \big)\big(  D_1[p]+D_2[p] +\mathcal{D}[\Theta] \big) \ds \end{aligned}
\end{equation}
independently of $t$. 
\end{proposition}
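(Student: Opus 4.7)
The plan is to mirror the strategy of Proposition~\ref{prop_theta}: I would test the pressure equation~\eqref{pressure_eq} and its time derivative against a carefully chosen list of multipliers, producing several differential energy identities, and then take a positive linear combination so that the left-hand sides sum up to $E_1[p]+E_2[p]+\int_0^t(D_1[p]+D_2[p])\ds$. Throughout I would write $\alpha=1-2k(\Theta)p$ and exploit the non-degeneracy~\eqref{degeneracy} to switch freely between $\|\sqrt{\alpha}\,u\|_{L^2}$ and $\|u\|_{L^2}$.

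For the lower-order part $E_1[p]+\int_0^t D_1[p]$, the first identity comes from testing \eqref{pressure_eq} by $-\Delta p_t$, which yields $\ddt\bigl(\tfrac{1}{2}\intO\alpha|\nabla p_t|^2\dx+\tfrac{h(0)}{2}\|\Delta p\|_{L^2}^2\bigr)+b\|\Delta p_t\|_{L^2}^2=\mathsf{R}_1$; the second comes from differentiating \eqref{pressure_eq} in time and testing by $p_{tt}$, giving $\ddt\bigl(\tfrac{1}{2}\intO\alpha|p_{tt}|^2\dx+\tfrac{h(0)}{2}\|\nabla p_t\|_{L^2}^2\bigr)+b\|\nabla p_{tt}\|_{L^2}^2=\mathsf{R}_2$; and the third comes from testing \eqref{pressure_eq} by $-\Delta p$, which supplies the missing $h(0)\|\Delta p\|_{L^2}^2$ dissipation in $D_1[p]$ while contributing an extra $\tfrac{b}{2}\ddt\|\Delta p\|_{L^2}^2$ that collects into the $(b+h(0))\|\Delta p\|_{L^2}^2$ coefficient in $E_1[p]$. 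For $E_2[p]+\int_0^t D_2[p]$ I would differentiate \eqref{pressure_eq} once in time and test the resulting equation by $-\Delta p_{tt}$, yielding $\ddt\bigl(\tfrac{1}{2}\intO\alpha|\nabla p_{tt}|^2\dx+\tfrac{h(0)}{2}\|\Delta p_t\|_{L^2}^2\bigr)+b\|\Delta p_{tt}\|_{L^2}^2=\mathsf{R}_4$, and additionally test the same time-differentiated equation by $p_{ttt}$ to generate $\|\sqrt{\alpha}\,p_{ttt}\|_{L^2}^2$ as dissipation, which together with \eqref{degeneracy} gives the $\|p_{ttt}\|_{L^2}^2$ term in $D_2[p]$. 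The remaining $b\|\nabla\Delta p\|_{L^2}^2$ in $E_2[p]$ and $h(0)\|\nabla\Delta p\|_{L^2}^2$ in $D_2[p]$ can be extracted either by differentiating \eqref{pressure_eq} once in space and testing with a third-order multiplier, or, more cleanly, by viewing $-h(0)\Delta p-b\Delta p_t=-\alpha p_{tt}+2k(\Theta)p_t^2+\tilde h(\Theta)\Delta p$ as an elliptic identity for $h(0)p+bp_t$ and invoking elliptic regularity in $H^3\cap H^1_0$ once the lower-order norms are in hand.

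The main obstacle will be the bookkeeping of the nonlinear right-hand sides $\mathsf{R}_i$, which collect three kinds of terms: first, the Westervelt sources $2k(\Theta)p_t^2$ and $\tilde h(\Theta)\Delta p$ together with their time derivatives; second, commutator remainders such as $\tfrac{1}{2}\intO\alpha_t|p_{tt}|^2\dx$, $\intO p_{tt}\nabla\alpha\cdot\nabla p_t\dx$, and $\intO\alpha_t\nabla p_t\cdot\nabla p\dx$, where $\alpha_t$ and $\nabla\alpha$ carry the factors $\Theta_t$, $p_t$, $\nabla\Theta$, $\nabla p$ weighted by $k$ or $k'$; and third, second-generation commutators produced by differentiating once more in time, which introduce $\Theta_{tt}$ and $p_{ttt}$. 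Each product has to be split by H\"older's inequality, the polynomial growth bounds~\eqref{h'_assump}--\eqref{properties_k} for $h$, $\tilde h$, $k$, $k'$, $k''$, and the interpolation inequalities~\eqref{lady}--\eqref{Agmon}, arranged so that in every product one factor lands in the dissipation $D_1[p]+D_2[p]+\mathcal{D}[\Theta]$ (to survive the time integration uniformly in $t$), another factor lands in the ``small'' energy combination $E_1[p]+(E_1[p])^{1-d/4}(E_2[p])^{d/4}+(\mathcal{E}_0[\Theta])^{1-d/4}(\mathcal{E}[\Theta])^{d/4}$, and the residual polynomial dependence on $\Theta$ is absorbed into the factor $1+\mathcal{E}[\Theta]+\mathcal{E}[\Theta]^{2+\gamma_2}+\mathcal{E}[\Theta]^{1+\gamma_1}$. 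After weighting the identities with suitable small positive constants, using Young's inequality to absorb $\varepsilon$-fractions of $D_1[p]+D_2[p]$ back into the left-hand side, and integrating from $0$ to $t$, one arrives at~\eqref{main_est_energy_p}.
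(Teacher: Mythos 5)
Your proposal follows essentially the same route as the paper's proof, which is carried out in Lemmas~\ref{lemma4} and \ref{lemma5}: the paper tests \eqref{pressure_eq} by $-\Delta p$ and $-\Delta p_t$ and the time-differentiated equation by $p_{tt}$ to control $E_1[p]$ and $D_1[p]$, then applies $\nabla$ to \eqref{pressure_eq} and tests with $-\nabla\Delta p$, and tests the time-differentiated equation by $p_{ttt}$ and by $-\Delta p_{tt}$ to control $E_2[p]$ and $D_2[p]$, with exactly the combination of H\"older, Ladyzhenskaya~\eqref{lady}, Agmon~\eqref{Agmon}, the growth bounds on $h'$, $\tilde h$, $k$, $k'$, and the non-degeneracy~\eqref{degeneracy} that you describe. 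The only minor cosmetic divergence is that the paper moves the $-2k(\Theta)p\,p_{tt}$ contribution to the right-hand side when testing by $-\Delta p_t$ and $-\Delta p_{tt}$ rather than keeping the variable weight $\alpha$ in those energy terms, and it does not pursue your proposed elliptic-regularity shortcut for $\nabla\Delta p$; both of these are immaterial to the argument.
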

The assertion in Proposition \ref{prop_pressure} will follow by putting together the results provided in the two upcoming lemmas, where we prove a priori bounds for $E_1[p]$ and $E_2[p]$ separately. The first of which is attained by by using the multipliers $p_{tt}, -\Delta p$ and $-\Delta p_t$. More precisely, we have the following estimate. 
%Recalling that
%\begin{equation}
%\tilde{h} (\Theta)=\int_0^\Theta h'(s) \ds,
%\end{equation}
%we obtain the bound
%\begin{equation} \label{h_tilde}
%\begin{aligned}
%\| \tilde{h} (\Theta) \|_{L^4} \leq \| \Theta \|_{L^4} \| h'(\Theta) \|_{L^\infty} &\, \lesssim \| \Theta \|_{L^4} ( 1+ \| \Theta \|_{L^\infty}^{1+ \gamma_1}) \\
%&\,  \lesssim  \| \nabla \Theta \|_{ L^2} ( 1+ \| \Delta \Theta \|_{L^2}^{1+ \gamma_1}).
%\end{aligned}
%\end{equation}
%The last bound in \eqref{h_tilde} follows from the inequality \eqref{h'_assump}, the embeddings $H^1(\Om) \hookrightarrow L^4(\Om)$, $H^2(\Om) \hookrightarrow L^\infty(\Om)$ and elliptic regularity.
 \begin{lemma} \label{lemma4} For all $t \geq 0$, it holds that
\begin{equation} \label{second_estimate_p}
\begin{aligned}
E_1 [p](t) + \int_0^t &\, D_1[p](s)\ds  \\
\lesssim  E_1[p](0)+  \int_0^t &\, \Big( 1 +\mathcal{E}[\Theta] + \mathcal{E}[\Theta]^{2+ \gamma_2}+\mathcal{E}[\Theta]^{ 1+ \gamma_1 } \Big) \\
&\times \Big( E_1 [p](s)+\big(\mathcal{E}_0[\Theta] \big)^{1-\frac{d}{4}} \big(\mathcal{E}[\Theta] \big)^{\frac{d}{4}} \Big) D_1 [p](s) \ds
\end{aligned}
\end{equation}
where the hidden constant above does not depend on $t$.
\end{lemma}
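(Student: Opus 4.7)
The plan is to derive the differential identity $\ddt E_1[p] + D_1[p] = \mathcal{R}$ and then estimate $\int_0^t|\mathcal{R}(s)|\ds$ by the right-hand side of \eqref{second_estimate_p}. The identity is produced by summing three multiplier identities for equation \eqref{pressure_eq}: first, differentiating the equation in $t$ and testing against $p_{tt}$, using
\[ \intO (1-2k(\Theta)p)p_{ttt}p_{tt}\dx = \tfrac{1}{2}\ddt \|\sqrt{1-2k(\Theta)p}\,p_{tt}\|_{L^2}^2 + \intO (k(\Theta)p)_t\, p_{tt}^2\dx \]
and integrating by parts in the $h(0)\Delta p_t$-term, yields $\tfrac{1}{2}\ddt(\|\sqrt{1-2k(\Theta)p}\,p_{tt}\|_{L^2}^2 + h(0)\|\nabla p_t\|_{L^2}^2) + b\|\nabla p_{tt}\|_{L^2}^2 = \mathcal{R}_1$; second, testing \eqref{pressure_eq} against $-\Delta p_t$ gives $\tfrac{1}{2}\ddt(\|\nabla p_t\|_{L^2}^2 + h(0)\|\Delta p\|_{L^2}^2) + b\|\Delta p_t\|_{L^2}^2 = \mathcal{R}_2$; third, testing against $-\Delta p$ gives $\tfrac{b}{2}\ddt\|\Delta p\|_{L^2}^2 + h(0)\|\Delta p\|_{L^2}^2 = \mathcal{R}_3$. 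Summing these three identities produces the coefficients $(1+h(0))$ in front of $\|\nabla p_t\|_{L^2}^2$ and $(b+h(0))$ in front of $\|\Delta p\|_{L^2}^2$ exactly as in $E_1[p]$, and reproduces $D_1[p]$ on the left, so that integrating in $t$ yields the skeleton inequality to which \eqref{second_estimate_p} must be matched.

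\textbf{Estimating the residuals.} Each $\mathcal{R}_j$ is a sum of volume integrals combining (i) the source terms $2k(\Theta)p_t^2$ and $\tilde h(\Theta)\Delta p$ of \eqref{pressure_eq} (and, for $\mathcal{R}_1$, their time derivatives), and (ii) commutator terms arising from moving spatial derivatives past the variable coefficient $1-2k(\Theta)p$. The coefficients $k(\Theta), k'(\Theta), \tilde h(\Theta), \tilde h'(\Theta)$ admit polynomial $L^\infty$ bounds of the form $1+\|\Theta\|_{H^2}^{\alpha}$ by \eqref{k_1}, \eqref{properties_k}, \eqref{h'_assump} combined with $H^2\hookrightarrow L^\infty$; the admissible exponents $\alpha\in\{1,\,1+\gamma_1,\,1+\gamma_2,\,2+\gamma_2\}$ are precisely what produces the prefactor $1+\mathcal{E}[\Theta]+\mathcal{E}[\Theta]^{2+\gamma_2}+\mathcal{E}[\Theta]^{1+\gamma_1}$ in \eqref{second_estimate_p}. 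For each volume integral, H\"older splits the integrand so that a dissipative factor from $\{\Delta p,\,\Delta p_t,\,\nabla p_{tt}\}$ stays in $L^2$ and is absorbed by $D_1[p]$; the coefficient is placed in $L^\infty$; and the remaining $p$- or $\Theta$-derivative factors are interpolated via Ladyzhenskaya \eqref{lady} and Agmon \eqref{Agmon}, so that the low-order norms $\|p_t\|_{L^2},\,\|p_{tt}\|_{L^2},\,\|\Theta_t\|_{L^2}$ appear with exponent $1-d/4$ (delivering $E_1[p]^{1-d/4}$ or $\mathcal{E}_0[\Theta]^{1-d/4}$) and the higher-order norms with exponent $d/4$. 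The non-degeneracy bound \eqref{degeneracy} is used throughout to freely interchange $\|\sqrt{1-2k(\Theta)p}\,p_{tt}\|_{L^2}$ and $\|p_{tt}\|_{L^2}$ up to fixed multiplicative constants.

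\textbf{Main obstacle.} The mixed factor $\mathcal{E}_0[\Theta]^{1-d/4}\mathcal{E}[\Theta]^{d/4}$ in \eqref{second_estimate_p} is traced to the $\Theta_t$-contributions in $\mathcal{R}_1$: expanding $\partial_t(\tilde h(\Theta)\Delta p) = \tilde h'(\Theta)\Theta_t\Delta p + \tilde h(\Theta)\Delta p_t$ and pairing with $p_{tt}$ yields $\intO \tilde h'(\Theta)\Theta_t\,\Delta p\,p_{tt}\dx$, which is bounded by $\|\tilde h'(\Theta)\|_{L^\infty}\|\Theta_t\|_{L^4}\|p_{tt}\|_{L^4}\|\Delta p\|_{L^2}$ and then interpolated via \eqref{lady} on $\Theta_t$ and $p_{tt}$. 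The most delicate residual is the commutator $\intO (k(\Theta)p)_t\,p_{tt}^2\dx$ generated by (M1): it carries no factor from $\{\Delta p,\,\Delta p_t,\,\nabla p_{tt}\}$, so the plan is to push $p_{tt}$ into $L^4$ via Ladyzhenskaya, extracting $\|\nabla p_{tt}\|_{L^2}^{d/2}$ as a dissipative contribution that is absorbed into $\int_0^t D_1[p]\ds$ through a Young $\varepsilon$-inequality, while $\|p_{tt}\|_{L^2}^{2-d/2}$ supplies the required $E_1[p]^{1-d/4}$ factor; the remaining factors $\|k(\Theta)p_t\|_{L^\infty}$ and $\|k'(\Theta)\Theta_t\,p\|_{L^2}$ are treated by a further Agmon/H\"older splitting, with $p$ placed in $L^\infty$ so that $\|\Delta p\|_{L^2}$ is available from $E_1[p]$. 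After all such residuals have been absorbed, the bound \eqref{second_estimate_p} follows.
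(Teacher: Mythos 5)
Your overall architecture matches the paper's: you differentiate \eqref{pressure_eq} in time and test against $p_{tt}$, test \eqref{pressure_eq} against $-\Delta p$ and against $-\Delta p_t$, and the weighted sum of the three yields $E_1[p]$ and $D_1[p]$ on the left; the coefficient bounds from \eqref{k_1}, \eqref{properties_k}, \eqref{h'_assump} together with Ladyzhenskaya and Agmon applied to the $\Theta$-factors are also exactly what produce the prefactors in \eqref{second_estimate_p}. That part is sound.

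The gap is in your treatment of the commutator $\intO (k(\Theta)p)_t\,p_{tt}^2\,\dx$, and it has two parts. First, a misreading of the target: \eqref{second_estimate_p} contains the factor $E_1[p]$, not $E_1[p]^{1-d/4}$; the interpolated powers of $E_1$ appear only at the $E_2$-level (Lemma~\ref{lemma5} and Proposition~\ref{prop_pressure}), not here. Second, and more seriously, the H\"older/interpolation split you propose does not close. Placing $p_t$ (or $k(\Theta)p_t$) in $L^\infty$ costs a full or interpolated power of $\|\Delta p_t\|_{L^2}\sim D_1[p]^{1/2}$, and Ladyzhenskaya on $p_{tt}^2$ extracts $\|\nabla p_{tt}\|_{L^2}^{d/2}\sim D_1[p]^{d/4}$. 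Whichever way you distribute these, for $d\geq 2$ the total $D_1[p]$-power in the product is $\geq 1$; the Young $\varepsilon$-split then leaves a bare power of $E_1[p]$ with no $D_1[p]$ factor, which is neither integrable in time nor of the form $(\cdots)\,D_1[p]$ demanded by \eqref{second_estimate_p}. The fix is the simpler split used in the paper: keep $p_t$ in $L^4$ (so $\|p_t\|_{L^4}\lesssim\|\nabla p_t\|_{L^2}\sim E_1[p]^{1/2}$, an energy-level norm), and bound one $p_{tt}$-factor in $L^2$ and the other in $L^4$, both by $\|\nabla p_{tt}\|_{L^2}$ via Poincar\'e and Sobolev, so that
\[
\intO k(\Theta)p_t\,p_{tt}^2\,\dx \lesssim k_1\,\|\nabla p_t\|_{L^2}\,\|\nabla p_{tt}\|_{L^2}^2
\leq \varepsilon\,\|\nabla p_{tt}\|_{L^2}^2 + C(\varepsilon)\,k_1^2\,\|\nabla p_t\|_{L^2}^2\,\|\nabla p_{tt}\|_{L^2}^2
\lesssim \varepsilon D_1[p] + E_1[p]\,D_1[p],
\]
and similarly for the $k'(\Theta)\Theta_t\,p\,p_{tt}^2$ piece with $p$ in $L^\infty$. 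No interpolation is needed for this residual. Finally, a minor bookkeeping point: the term $\intO p_{tt}\,\Delta p\,\dx$ from the $-\Delta p$ test yields a bare $\|\nabla p_{tt}\|_{L^2}^2$ on the right, which must be absorbed by the $b\|\nabla p_{tt}\|_{L^2}^2$ coming from the $p_{tt}$ test; the paper uses a small weight $\lambda$ on the $-\Delta p$ estimate rather than a unit-weight sum, and you should carry that weight (or check that the implied constants allow equal weights).
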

 \begin{proof}
 We differentiate with respect to time the equation \eqref{pressure_eq} to get
 \begin{equation} \label{pressure_eq_diff_t}
 \begin{aligned}
(1-2 k( \Theta) p) p_{ttt}-h (0)\Delta p_t - b \Delta p_{tt} =  
 &\, 2k' (\Theta)\Theta_t ((p_{t})^2+p p_{tt})+ 6 k (\Theta)p_t p_{tt} \\
 &\,+\tilde{h}' (\Theta) \Theta_t \Delta p +\tilde{h} (\Theta)\Delta p_t.
 \end{aligned}
 \end{equation}
 We multiply \eqref{pressure_eq_diff_t} by $p_{tt}$ and integrate over $\Om$, using  integration by parts leads to the identity
 \begin{equation} \label{identity_2}
\begin{aligned}
\frac{1}{2} &\, \ddt \Big( \Vert  \sqrt{1-2 k( \Theta) p} p_{t t} \Vert^2_{L^2}+ \Vert \sqrt{h(0)} \nabla p_t \Vert^2_{L^2}\Big)+b \Vert \nabla p_{tt} \Vert^2_{L^2}\\
= &\, \intO \Big( 2k' (\Theta) \Theta_t(p_{t})^2 p_{tt} + k' (\Theta)\Theta_t p (p_{tt})^2+ 5 k (\Theta)p_t (p_{tt} )^2 \Big) \dx  \\
 &\,+ \intO \Big(\tilde{h}' (\Theta) \Theta_t \Delta p   p_{tt} \dx  -\tilde{h}' (\Theta) \nabla \Theta \cdot \nabla p_t  p_{tt}  -  \tilde{h} (\Theta) \nabla p_t \cdot \nabla  p_{tt} \Big) \dx\\
 :=&\, R_1+R_2.
\end{aligned}
\end{equation}
First we derive a bound for $R_1$
\begin{equation}
R_1=\intO \big( 2k' (\Theta) \Theta_t(p_{t})^2 p_{tt} + k' (\Theta)\Theta_t p (p_{tt})^2+ 5 k (\Theta)p_t (p_{tt} )^2 \big) \dx.
\end{equation}
 For this, we apply H\"{o}lder's inequality to find
\begin{equation}
\begin{aligned}
\vert R_1 \vert \leq &\,    2 \| k' (\Theta) \|_{L^\infty} \| \Theta_t \|_{L^4} \| p_t \|_{L^4}^2 \| p_{tt} \|_{L^4}+ \| k' (\Theta) \|_{L^\infty} \| \Theta_t \|_{L^4} \| p \|_{L^\infty} \| p_{tt} \|_{L^2} \| p_{tt} \|_{L^4}\\
&\,+5\| k (\Theta) \|_{L^\infty} \|p_t \|_{L^4} \| p_{tt} \|_{L^2} \| p_{tt} \|_{L^4}.
\end{aligned}
\end{equation}
Using the embeddings $H^1(\Om) \hookrightarrow L^4(\Om), H^2(\Om) \hookrightarrow L^\infty(\Om)$, Young and Poincar\'{e} inequalities, elliptic regularity and \eqref{properties_k}, it follows that
\begin{equation} \label{estimate_R_1}
\begin{aligned}
\vert R_1 \vert \leq  &\,  C(\varepsilon) \Big[(1+\| \Delta \Theta \|_{L^2}^{1+\gamma_2})^2 \| \nabla \Theta_t \|_{L^2}^2 \Big( \| \nabla p_{t} \|_{L^2}^4 + \| \Delta p \|_{L^2}^2 \| \nabla p_{tt} \|_{L^2}^2 \Big)\\
&\,+ k_1   \| \nabla p_t \|_{L^2}^2 \|  \nabla p_{tt} \|_{L^2}^2  \Big]+ \varepsilon \| \nabla  p_{tt} \|_{L^2}^2\\
\lesssim &\, \Big( 1 +\mathcal{E}[\Theta] + \mathcal{E}[\Theta]^{2+ \gamma_2} \Big)E_1[p] D_1[p] +\varepsilon \| \nabla  p_{tt} \|_{L^2}^2.
\end{aligned}
\end{equation}
Similarly, we can get an estimate for $R_2$
\begin{equation}
R_2=\intO \Big(\tilde{h}' (\Theta) \Theta_t \Delta p   p_{tt} \dx  -\tilde{h}' (\Theta) \nabla \Theta \cdot \nabla p_t  p_{tt}  -  \tilde{h} (\Theta) \nabla p_t \cdot \nabla  p_{tt} \Big) \dx.
\end{equation}
Applying H\"{o}lder's inequality yields
\begin{equation}
\begin{aligned}
\vert R_2 \vert \leq &\,  \| \tilde{h}' (\Theta) \|_{L^\infty} \| \Theta_t \|_{L^4} \| \Delta p \|_{L^2} \| p_{tt} \|_{L^4}+ \| \tilde{h}' (\Theta) \|_{L^\infty}   \| \nabla \Theta \|_{L^4} \| \nabla p_t \|_{L^2} \| p_{tt} \|_{L^4} \\
&\, + \| \tilde{h} (\Theta) \|_{L^\infty}   \| \nabla p_t \|_{L^2} \| \nabla p_{tt} \|_{L^2}.
\end{aligned}
\end{equation}
Recall that
\begin{equation}
\tilde{h} (\Theta)=\int_0^\Theta h'(s) \ds,
\end{equation}
then we have the bound 
\begin{equation} \label{tilde_h_infty}
\begin{aligned}
\| \tilde{h} (\Theta) \|_{L^\infty} \leq \| \Theta \|_{L^\infty} \| h'(\Theta) \|_{L^\infty} &\, \lesssim \| \Theta \|_{L^\infty} ( 1+ \| \Theta \|_{L^\infty}^{1+ \gamma_1}) \\
&\,  \lesssim  \| \Delta \Theta \|_{ L^2} ( 1+ \| \Delta \Theta \|_{L^2}^{1+ \gamma_1})
\end{aligned}
\end{equation}
%We have   
%\begin{equation} \label{}
%\begin{aligned}
%\| \tilde{h} (\Theta) \|_{L^\infty} \leq \| \Theta \|_{L^\infty} \| h'(\Theta) \|_{L^\infty} \lesssim &\, \| \Theta \|_{L^\infty} ( 1+ \|  \Theta \|_{L^\infty}^{1+ \gamma_1}).
%%\lesssim &\, \| \Delta \Theta \|_{L^2} ( 1+ \| \Delta \Theta \|_{L^2}^{1+ \gamma_1}).
%\end{aligned}
%\end{equation}
which results from \eqref{h'_assump} and elliptic regularity. Furthermore, since  $h'(\Theta)= \tilde{h}^\prime(\Theta)$, the inequality \eqref{h'_assump} gives 
\begin{equation}
\| \tilde{h}'(\Theta) \|_{L^\infty}\lesssim  1+ \| \Theta \|_{L^\infty}^{1+\gamma_1} .
%\lesssim \| \Delta \Theta \|_{L^2} ( 1+ \| \Delta \Theta \|_{L^2}^{1+ \gamma_1}). 
\end{equation}
Hence, the above estimates along with Young's inequality yield
\begin{equation}
\begin{aligned}
\vert R_2 \vert \leq  C(\varepsilon) (1+\| \Theta \|_{L^\infty}^{1+\gamma_1})^2 \Big( &\, \| \Theta_t \|_{L^4}^2 \| \Delta p \|_{L^2}^2 +  \| \nabla \Theta \|_{L^4}^2 \| \nabla p_t \|_{L^2}^2  \\
&\, +  \| \Theta \|_{L^\infty}^2  \| \nabla p_t \|_{L^2}^2 \Big) + \varepsilon \| \nabla  p_{tt} \|_{L^2}^2 .
\end{aligned}
\end{equation}
Moreover, the interpolation inequalities \eqref{lady} and  \eqref{Agmon}  together with the embedding $H^2(\Om) \hookrightarrow L^\infty(\Om)$ and elliptic regularity leads to 
\begin{equation}
\begin{aligned}
\vert R_2 \vert \lesssim   (1+\| \Delta \Theta \|_{L^2}^{2+2\gamma_1}) \Big( &\, \| \Theta_t \|_{L^2}^{2-\frac{d}{2}}  \| \nabla  \Theta_t \|_{L^2} ^{\frac{d}{2}}\| \Delta p \|_{L^2}^2 +  \| \nabla \Theta \|_{L^2}^{2-\frac{d}{2}} \| \Delta \Theta \|_{L^2}^{\frac{d}{2}}  \| \Delta p_t \|_{L^2}^2  \\
&\, +  \| \Theta \|_{L^2}^{2-\frac{d}{2}}  \| \Delta \Theta \|_{L^2} ^{\frac{d}{2}}  \| \Delta p_t \|_{L^2}^2   \Big)+\varepsilon \| \nabla  p_{tt} \|_{L^2}^2.
\end{aligned}
\end{equation}
Observe that we have also used the following inequality to reach the estimate above
\begin{equation}
\| \nabla \Theta \|_{H^1} \leq \| \Theta \|_{H^2} \lesssim \| \Delta \Theta \|_{L^2}.
\end{equation}
Then, it follows that
\begin{equation} \label{estimate_R_2}
\begin{aligned}
\vert R_2 \vert \lesssim &\, (1+ \mathcal{E}[\Theta]^{ 1+ \gamma_1 }) \big(\mathcal{E}_0[\Theta] \big)^{1-\frac{d}{4}} \big(\mathcal{E}[\Theta] \big)^{\frac{d}{4}}D_1[p]+\varepsilon \| \nabla  p_{tt} \|_{L^2}^2.
\end{aligned}
\end{equation}
By putting together the bounds for $R_1$ and $R_2$ in \eqref{estimate_R_1} and \eqref{estimate_R_2} respectively, then  selecting $\varepsilon$ suitably small, we conclude 
\begin{equation} \label{first_estimate_E_p_2}
\begin{aligned}
\frac{1}{2} &\, \ddt \Big( \Vert  \sqrt{1-2 k( \Theta) p} p_{t t} \Vert^2_{L^2}+ \Vert \sqrt{h(0)} \nabla p_t \Vert^2_{L^2}\Big)+\frac{b}{2} \Vert \nabla p_{tt} \Vert^2_{L^2}\\
\lesssim &\, \Big( 1 +\mathcal{E}[\Theta] + \mathcal{E}[\Theta]^{2+ \gamma_2}+\mathcal{E}[\Theta]^{ 1+ \gamma_1 } \Big) \Big( E_1[p]+\big(\mathcal{E}_0[\Theta] \big)^{1-\frac{d}{4}} \big(\mathcal{E}[\Theta] \big)^{\frac{d}{4}} \Big) D_1[p] .
\end{aligned}
\end{equation}
Next, we test the equation \eqref{pressure_eq} by $-\Delta p$ and we integrate in space; thus we get the identity
\begin{equation}
\begin{aligned}
&\,\frac{b}{2} \ddt \| \Delta p \|_{L^2}^2 + h(0) \| \Delta p \|_{L^2}^2\\
= &\,\intO \big(p_{tt}  -2 k(\Theta) p p_{tt}   - 2 k(\Theta) (p_t)^2   - \tilde{h}(\Theta)  \Delta p  \big) \Delta p \dx.
\end{aligned}
\end{equation}
We estimate the right-hand side of the identity above using H\"{o}lder's inequality to obtain  
\begin{equation}
\begin{aligned}
&\,\frac{b}{2} \ddt \| \Delta p \|_{L^2}^2 + h(0) \| \Delta p \|_{L^2}^2\\
\leq  &\, \Big(\| p_{tt} \|_{L^2} + 2\| k(\Theta) \|_{L^\infty} \|  p  \|_{L^4} \| p_{tt} \|_{L^4}+  2 \| k(\Theta) \|_{L^\infty} \| p_t \|_{L^4}^2 \\
&\,+  \|  \tilde{h}(\Theta) \|_{L^\infty} \|  \Delta p \|_{L^2}\Big) \|\Delta p \|_{L^2}.
\end{aligned}
\end{equation}
Thanks to the bound \eqref{tilde_h_infty},  the embedding $H^1(\Om) \hookrightarrow L^4(\Om)$, Poincar\'{e} and Young inequalities and \eqref{properties_k}, we obtain
 \begin{equation}
\begin{aligned}
\frac{b}{2} \ddt &\, \| \Delta p \|_{L^2}^2 + h(0) \| \Delta p \|_{L^2}^2\\
\leq   C(\varepsilon) \Big( &\, \| \nabla p_{tt} \|_{L^2}^2 + k_1^2 \| \nabla p  \|_{L^2}^2 \| \nabla p_{tt} \|_{L^2}^2+  k_1^2 \| \nabla p_t \|_{L^2}^4\\
&\,+   ( 1+ \| \Theta \|_{L^\infty}^{1+ \gamma_1})^2  \| \Theta \|_{L^\infty}^2 \|  \Delta p \|_{L^2}^2 \Big) +\varepsilon \|  \Delta p \|_{L^2}^2.
\end{aligned}
\end{equation}
From here, by appealing to the interpolation inequality \eqref{Agmon} and elliptic regularity, and selecting $\varepsilon$ small enough, it results
\begin{equation} \label{second_estimate_E_p_2}
\begin{aligned}
&\,\frac{b}{2} \ddt \| \Delta p \|_{L^2}^2 + \frac{h(0)}{2} \| \Delta p \|_{L^2}^2\\
\lesssim  &\,\| \nabla p_{tt} \|_{L^2}^2 +   \| \Delta p  \|_{L^2}^2 \| \nabla p_{tt} \|_{L^2}^2+    \| \nabla p_t \|_{L^2}^2 \| \Delta  p_t \|_{L^2}^2 \\
&\,+   ( 1+ \| \Delta  \Theta \|_{L^2}^{2+2 \gamma_1})  \| \Theta \|_{L^2}^{2-\frac{d}{2}} \| \Delta \Theta \|_{L^2}^{\frac{d}{2}}  \|  \Delta p \|_{L^2}^2\\
\lesssim  &\, \| \nabla p_{tt} \|_{L^2}^2+ (1+ \mathcal{E}[\Theta]^{1+ \gamma_1})\Big( E_1[p]+ \big(\mathcal{E}_0[\Theta] \big)^{ 1-\frac{d}{4}} \big(\mathcal{E}[\Theta] \big)^{\frac{d}{4}} \Big) D_1[p].
\end{aligned}
\end{equation}
Now, we add up $\lambda \times$\eqref{second_estimate_E_p_2} to \eqref{first_estimate_E_p_2}, where $\lambda$ is suitably chosen to absorb the first term on the right-hand side of \eqref{second_estimate_E_p_2} by the left-hand side of \eqref{first_estimate_E_p_2}, we  obtain  the bound
\begin{equation} \label{intermediate_est}
\begin{aligned}
&\, \frac{1}{2} \ddt \Big( \Vert  \sqrt{1-2 k( \Theta) p} p_{t t} \Vert^2_{L^2}+ \Vert \sqrt{h(0)} \nabla p_t \Vert^2_{L^2}+ b \| \Delta p \|_{L^2}^2 \Big)\\
&\,+ \frac{b}{4} \Vert \nabla p_{tt} \Vert^2_{L^2}+\frac{h(0)}{2} \| \Delta p \|_{L^2}^2\\
\lesssim &\, \Big( 1 +\mathcal{E}[\Theta] + \mathcal{E}[\Theta]^{2+ \gamma_2}+\mathcal{E}[\Theta]^{ 1+ \gamma_1 } \Big) \Big( E_1[p]+  \big(\mathcal{E}_0[\Theta] \big)^{1-\frac{d}{4}} \big(\mathcal{E}[\Theta] \big)^{\frac{d}{4}} \Big)D_1[p].
\end{aligned}
\end{equation}
In the last step of the proof, we multiply the equation \eqref{pressure_eq} by $-\Delta p_t$ and we integrate over $\Om$,  to find 
\begin{equation}
\begin{aligned}
&\,\frac{1}{2} \ddt \Big(  \| \nabla p_t \|_{L^2}^2 + h(0)\| \Delta p \|_{L^2}^2 \Big) + b \| \Delta p_t \|_{L^2}^2\\
= &\,\intO \big(-2 k(\Theta) p p_{tt}  - 2 k(\Theta) (p_t)^2 - \tilde{h}(\Theta)   \Delta p  \big) \Delta p_t \dx. 
\end{aligned}
\end{equation}
Then, we have by using H\"older's inequality together with Young's inequality 
\begin{equation} \label{estimate_delta_p_t}
\begin{aligned}
&\,\frac{1}{2} \ddt \Big(  \| \nabla p_t \|_{L^2}^2 + h(0)\| \Delta p \|_{L^2}^2 \Big) + b \| \Delta p_t \|_{L^2}^2\\
\leq  &\, \Big(2 \| k(\Theta) \| _{L^\infty} \| p \|_{L^4} \| p_{tt} \|_{L^4} + 2 \| k(\Theta) \|_{L^\infty} \| p_t \|^2_{L^4}+  \| \tilde{h}(\Theta) \|_{L^\infty} \|   \Delta p \|_{L^2} \Big) \|  \Delta p_t \|_{L^2}\\
\leq  &\, C \Big( k_1^2 \| \nabla p \|_{L^2}^2 \| \nabla p_{tt} \|_{L^2}^2 +  k_1^2 \| \nabla p_t \|^4_{L^2}+  \| \tilde{h}(\Theta) \|_{L^\infty}^2 \|   \Delta p \|_{L^2}^2 \Big)+ \frac{b}{2}\|  \Delta p_t \|_{L^2}^2,
\end{aligned}
\end{equation}
where we have made use of \eqref{properties_k} and the embedding $H^1(\Om) \hookrightarrow  L^4(\Om)$. Moreover, recalling \eqref{tilde_h_infty} and using Ladyzhenskaya's inequality \eqref{lady}, we obtain
\begin{equation} \label{estimate_delta_p_t_1}
\begin{aligned}
&\,\frac{1}{2} \ddt \Big(  \| \nabla p_t \|_{L^2}^2 + h(0)\| \Delta p \|_{L^2}^2 \Big) + \frac{b}{2} \| \Delta p_t \|_{L^2}^2\\
\lesssim  &\, \| \Delta p \|_{L^2}^2 \| \nabla p_{tt} \|_{L^2}^2+   \| \nabla p_t \|^2_{L^2} \| \Delta p_t \|^2_{L^2}\\
&\,+ ( 1+ \| \Delta \Theta \|_{L^2}^{2+ 2 \gamma_1})\| \Theta \|_{L^2}^{2-\frac{d}{2}} \| \Delta \Theta \|_{L^2}^{\frac{d}{2}}  \|  \Delta p \|_{L^2}^2\\
\lesssim &\, \Big( 1+\mathcal{E}[\Theta]^{ 1+ \gamma_1 } \Big) \Big(E_1[p]+\big(\mathcal{E}_0[\Theta] \big)^{1-\frac{d}{4}} \big(\mathcal{E}[\Theta] \big)^{\frac{d}{4}} \Big) D_1[p].
\end{aligned}
\end{equation}
Finally, adding up the bounds \eqref{estimate_delta_p_t_1} and \eqref{intermediate_est}, then integrating in time the resulting estimate, \eqref{second_estimate_p} folllows. This completes the proof of Lemma \ref{lemma4}.
 \end{proof}

 By inspecting the already-established estimates, it becomes clear that the higher-order energy $E_2[p]$ is needed to bound some nonlinear terms. Consequently, in the subsequent lemma, our focus will be on deriving an estimate for $E_2[p]$. 
  For this, we will  differentiate the pressure equation in space as well as in time and use suitable multipliers.
 
 \begin{lemma} \label{lemma5}
 For all $t \geq 0$, we have
\begin{equation} \label{third_estimate_p}
\begin{aligned} 
   &E_2[p](t) + \int_0^t D_2[p](s) \ds \\
\lesssim &\, E_2[p](0) + \int_0^t D_1[p](s) \ds \\
 &\,+ \int_0^t  \big(1+\mathcal{E}[\Theta]+ \mathcal{E}[\Theta]^{2+\gamma_2}+ \mathcal{E}[\Theta]^{1+ \gamma_1} \big) \\
 &\,\times  \big( E_1[p] + (E_1[p])^{1-\frac{d}{4}}(E_2[p])^{\frac{d}{4}}+ (\mathcal{E}_0[\Theta])^{1-\frac{d}{4}} (\mathcal{E}[\Theta])^{\frac{d}{4}} \big) \big(D_1[p]+D_2[p] + \mathcal{D}[\Theta]  \big)  \ds,
\end{aligned}
\end{equation}
where the  hidden constant is independent of $t$.
 \end{lemma}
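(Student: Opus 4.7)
The plan is to mirror Lemma~\ref{lemma4} one differentiation order higher. Starting from the time-differentiated pressure equation \eqref{pressure_eq_diff_t}, I would test it successively with $p_{ttt}$ and $-\Delta p_{tt}$. Testing by $p_{ttt}$ produces the dissipation $\|\sqrt{1-2k(\Theta)p}\,p_{ttt}\|_{L^2}^2$ (which, by \eqref{degeneracy}, controls $\|p_{ttt}\|_{L^2}^2$) and, after integrating the $-b\Delta p_{tt}$ term by parts, the time derivative $\tfrac{b}{2}\frac{d}{dt}\|\nabla p_{tt}\|_{L^2}^2$. The intermediate-order term $-h(0)\int_\Omega \Delta p_t\,p_{ttt}\,dx$ is rewritten as $h(0)\frac{d}{dt}\!\int_\Omega \nabla p_t\cdot\nabla p_{tt}\,dx - h(0)\|\nabla p_{tt}\|_{L^2}^2$, whose residual $h(0)\|\nabla p_{tt}\|_{L^2}^2$ is a component of $D_1[p]$ and thus absorbable into the $\int_0^t D_1[p]$ already present on the right-hand side of \eqref{third_estimate_p}. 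Testing \eqref{pressure_eq_diff_t} by $-\Delta p_{tt}$ then yields the dissipation $b\|\Delta p_{tt}\|_{L^2}^2$, the energy derivative $\tfrac{h(0)}{2}\frac{d}{dt}\|\Delta p_t\|_{L^2}^2$ and, after one integration by parts on the principal part, $\tfrac{1}{2}\frac{d}{dt}\|\sqrt{1-2k(\Theta)p}\,\nabla p_{tt}\|_{L^2}^2$ together with a commutator $\int_\Omega(k'(\Theta)\Theta_t p + k(\Theta)p_t)|\nabla p_{tt}|^2\,dx$ of exactly the type already handled in Lemma~\ref{lemma4}.

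To capture $\|\nabla\Delta p\|_{L^2}^2$ in both $E_2[p]$ and $D_2[p]$, I would use elliptic regularity applied pointwise in time to the original pressure equation \eqref{pressure_eq}. Rewriting
\[
-h(0)\Delta p = -(1-2k(\Theta)p)p_{tt} + b\Delta p_t + 2k(\Theta)(p_t)^2 + \tilde h(\Theta)\Delta p,
\]
and taking the $H^1$ norm of both sides gives $\|\nabla\Delta p\|_{L^2}\lesssim \|\nabla(\textup{RHS})\|_{L^2}$. After time integration this supplies the $\|\nabla\Delta p\|_{L^2}^2$ dissipation in terms of $\int_0^t(D_1[p]+D_2[p])$ plus nonlinear remainders, while the pointwise-in-time version furnishes the corresponding energy contribution $b\|\nabla\Delta p\|_{L^2}^2$.

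The main obstacle will be estimating the new nonlinear source terms of \eqref{pressure_eq_diff_t}, namely $2k'(\Theta)\Theta_t\bigl((p_t)^2+p\,p_{tt}\bigr)$, $6k(\Theta)p_t p_{tt}$, $\tilde h'(\Theta)\Theta_t\Delta p$ and $\tilde h(\Theta)\Delta p_t$, once paired with $p_{ttt}$, $-\Delta p_{tt}$ or $\nabla$ of the original equation. Each product must be split by H\"older's inequality; the $L^\infty$ norms of $k$, $k'$, $\tilde h$, $\tilde h'$ are bounded by \eqref{properties_k}, \eqref{h'_assump} and \eqref{tilde_h_infty} (producing the polynomial prefactors $1+\mathcal{E}[\Theta]+\mathcal{E}[\Theta]^{2+\gamma_2}+\mathcal{E}[\Theta]^{1+\gamma_1}$); and crucially the factor carrying the smallness — typically $\|\nabla p_t\|_{L^2}$, $\|p_{tt}\|_{L^2}$, $\|\Theta\|_{L^2}$ or $\|\Theta_t\|_{L^2}$ — has to be extracted via the interpolation inequalities \eqref{lady} and \eqref{Agmon} rather than the plain embeddings $H^1\hookrightarrow L^4$, $H^2\hookrightarrow L^\infty$. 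This is precisely what produces the structure $\bigl(E_1[p]+(E_1[p])^{1-d/4}(E_2[p])^{d/4}+(\mathcal{E}_0[\Theta])^{1-d/4}(\mathcal{E}[\Theta])^{d/4}\bigr)\bigl(D_1[p]+D_2[p]+\mathcal{D}[\Theta]\bigr)$ appearing on the right of \eqref{third_estimate_p}.

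Finally, a weighted linear combination of the two multiplier estimates and of the elliptic bound (with weights $\lambda>0$ chosen so that all $\frac{d}{dt}(\cdot)$ terms recombine, up to harmless positive constants, into $\frac{d}{dt}(2E_2[p])$), followed by Young's $\varepsilon$-inequality to absorb any small $\varepsilon\|p_{ttt}\|_{L^2}^2$, $\varepsilon\|\Delta p_{tt}\|_{L^2}^2$ and $\varepsilon\|\nabla\Delta p\|_{L^2}^2$ left on the right, and integration in $t$, delivers the claimed bound \eqref{third_estimate_p}.
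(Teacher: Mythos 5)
Your outline for the two time-differentiated multiplier steps (testing \eqref{pressure_eq_diff_t} by $p_{ttt}$ and by $-\Delta p_{tt}$) matches the paper's steps \eqref{identity_p_ttt}--\eqref{estimate_p_ttt} and \eqref{identity_delta_p_tt}--\eqref{estimate_delta_p_tt} in substance, modulo a cosmetic difference: the paper does not integrate by parts to keep a weighted coefficient $\sqrt{1-2k(\Theta)p}$ in front of $\nabla p_{tt}$; it simply moves the lower-order term $2k(\Theta)p\,p_{ttt}$ into $R_{31}$, so no commutator appears, and it handles the cross term $h(0)\!\int\Delta p_t\,p_{ttt}$ with a direct Young inequality rather than by producing a boundary-in-time mixed term $\int\nabla p_t\cdot\nabla p_{tt}$. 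Those are harmless alternative routes. Your estimates of the nonlinear remainders via \eqref{properties_k}, \eqref{h'_assump}, \eqref{tilde_h_infty} and the interpolation inequalities \eqref{lady}, \eqref{Agmon} are exactly what the paper does.

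The genuine gap is in how you propose to capture $\|\nabla\Delta p\|_{L^2}^2$. Rewriting $-h(0)\Delta p = -(1-2k(\Theta)p)p_{tt}+b\Delta p_t+2k(\Theta)(p_t)^2+\tilde h(\Theta)\Delta p$ and taking $H^1$ norms forces the \emph{linear} term $b\|\nabla\Delta p_t\|_{L^2}$ onto the right-hand side, and that quantity is controlled by neither $E_1,E_2$ nor $D_1,D_2$ nor the $\Theta$-energies: it sits one spatial derivative above anything in the dissipation budget. Consequently the claimed bound of the form
\begin{equation}
\int_0^t \|\nabla\Delta p\|_{L^2}^2\,\textup{d}s \;\lesssim\; \int_0^t (D_1[p]+D_2[p])\,\textup{d}s + \text{nonlinear remainders}
\end{equation}
does not follow, and the ``pointwise-in-time'' energy version has the same problem. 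The paper's route avoids this precisely: it applies $\nabla$ to \eqref{pressure_eq} and multiplies by $-\nabla\Delta p$, so the troublesome contribution $b\nabla\Delta p_t$ pairs with $-\nabla\Delta p$ to produce the \emph{exact time derivative} $\tfrac{b}{2}\ddt\|\nabla\Delta p\|_{L^2}^2$, which goes to the left-hand side as the $E_2[p]$-part, while $h(0)\nabla\Delta p$ pairs with $-\nabla\Delta p$ to give the dissipation $h(0)\|\nabla\Delta p\|_{L^2}^2$. In other words, the weighted-multiplier structure is essential here; a plain elliptic lift will not close. You should replace that part of your plan with the paper's identity \eqref{identity_nabla_delta_p} and its estimates of $R_{11},R_{12},R_{13}$.
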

 \begin{proof}
 First, we apply the gradient to the pressure equation \eqref{pressure_eq},
 \begin{equation}
 \begin{aligned}
 &\nabla p_{tt}-h(0) \nabla \Delta p- \tilde{h}(\Theta) \nabla \Delta p-b \nabla \Delta p_t\\
 =&\, 2 k(\Theta)( p \nabla p_{tt} + p_{tt} \nabla p + 2 p_t \nabla p_t ) + 2 k'(\Theta) \nabla \Theta(p p_{tt}+(p_t)^2)  + \tilde{h}'(\Theta) \nabla \Theta \Delta p,  
 \end{aligned}
 \end{equation}
  multiply by $-\nabla \Delta p$ and integrate over $\Om$ to obtain
 \begin{equation} \label{identity_nabla_delta_p}
 \begin{aligned}
 &\,\frac{b}{2} \ddt \| \nabla \Delta p \|^2_{L^2} + \| \sqrt{h(0)} \nabla \Delta p \|^2_{L^2} + \big\| \sqrt{\tilde{h}(\Theta)} \nabla \Delta p \big\|^2_{L^2}\\
 =&\, \intO \nabla p_{tt} \cdot \nabla \Delta p \dx-2\intO  k(\Theta)( p \nabla p_{tt} + p_{tt} \nabla p + 2 p_t \nabla p_t ) \cdot  \nabla \Delta p \dx\\
 &\, -2 \intO  k'(\Theta)(p p_{tt}+(p_t)^2) \nabla \Theta \cdot  \nabla \Delta p \dx- \intO \tilde{h}'(\Theta) \Delta p \nabla \Theta \cdot \nabla \Delta p \dx\\
 =&\, \intO \nabla p_{tt} \cdot \nabla \Delta p \dx + R_{11}+ R_{12}+R_{13}\\
 \leq &\,  \frac{5}{4 h(0)} \| \nabla p_{tt} \|^2_{L^2} + \frac{1}{5}\| \sqrt{h(0)} \nabla \Delta p \|^2_{L^2} + \vert R_{11} \vert+ \vert R_{12} \vert+ \vert R_{13} \vert.
 \end{aligned}
 \end{equation}
 Our next goal is to estimate the terms $R_{1j}, \, j=1,2,3$ on the right-hand side of \eqref{identity_nabla_delta_p}. 
 
The integral $R_{11}$ can be estimated as follows
\begin{equation}
\begin{aligned}
\vert R_{11} \vert \leq 2 \| k(\Theta) \|_{L^\infty} \Big( \| p \|_{L^\infty} \| \nabla p_{tt} \|_{L^2} +  \| p_{tt} \|_{L^4} \| \nabla p \|_{L^4} + 2 \| p_t \|_{L^4} \| \nabla p_t \|_{L^4}\Big) \|  \nabla \Delta p \|_{L^2},
\end{aligned}
\end{equation} 
which becomes after using Young's inequality,  the embeddings $H^1(\Om) \hookrightarrow L^4(\Om)$, $H^2(\Om) \hookrightarrow L^\infty(\Om)$, elliptic regularity and \eqref{properties_k} 
\begin{equation} \label{R_1_ineq}
\begin{aligned}
\vert R_{11} \vert \leq &\, C(\varepsilon) \big(  \| \Delta p \|_{L^2}^2 \| \nabla p_{tt} \|_{L^2}^2 +  \| \nabla p_t \|_{L^2}^2 \| \Delta p_t \|_{L^2}^2\big)+ \varepsilon \|  \nabla \Delta p \|_{L^2}^2\\
\lesssim &\,  E_1[p] D_1[p] + \varepsilon \|  \nabla \Delta p \|_{L^2}^2.
\end{aligned}
\end{equation}
Likewise, we can treat the integral $R_{12}$ as 
\begin{equation}
\begin{aligned}
\vert R_{12} \vert \leq  2 \| k'(\Theta) \|_{L^\infty} \big( \|p \|_{L^\infty} \| p_{tt} \|_{L^4}+ \|p_t \|_{L^\infty} \|p_t \|_{L^4}) \| \nabla \Theta \|_{L^4} \|  \nabla \Delta p \|_{L^2}.
\end{aligned}
\end{equation}
Once again, thanks to the embeddings $H^1(\Om) \hookrightarrow L^4(\Om)$, $H^2(\Om) \hookrightarrow L^\infty(\Om)$ and \eqref{properties_k}, we infer that 
\begin{equation} \label{R_2_ineq}
\begin{aligned}
\vert R_{12} \vert \leq &\, C(\varepsilon)( 1+ \| \Delta  \Theta \|_{L^2}^{1+ \gamma_2})^2 \| \Delta \Theta \|_{L^2}^2 \big( \| \Delta p \|_{L^2}^2 \| \nabla p_{tt} \|_{L^2}^2+  \| \nabla p_t \|_{L^2}^2 \| \Delta p_t \|_{L^2}^2 \big)\\
&\,+ \varepsilon  \|  \nabla \Delta p \|_{L^2}^2\\
\lesssim &\, \big(1+ \mathcal{E}[\Theta]^{1+ \gamma_2} \big)\mathcal{E}[\Theta] E_1[p] D_1[p] + \varepsilon  \|  \nabla \Delta p \|_{L^2}^2.
\end{aligned}
\end{equation}
The last integral $R_{13}$ in \eqref{identity_nabla_delta_p} can be handled in a similar way. We have
\begin{equation} \label{ineq_R_13}
\begin{aligned}
 \vert R_{13} \vert \leq &\,  \| \tilde{h}'(\Theta) \| _{L^\infty} \|  \Delta p \|_{L^4} \| \nabla \Theta \|_{L^4}  \| \nabla \Delta p \| _{L^2} \\  
 \leq &\, C(\varepsilon)   \| \tilde{h}'(\Theta) \| _{L^\infty}^2 \|  \Delta p \|_{L^4}^2 \| \nabla \Theta \|_{L^4}^2 +\varepsilon \| \nabla \Delta p \|_{L^2}^2\\
 \lesssim &\, ( 1+ \| \Delta  \Theta \|_{L^2}^{1+ \gamma_1})^2 \|  \Delta p \|_{L^2}^{2-\frac{d}{2}} \|  \Delta p \|_{H^1}^{\frac{d}{2}} \| \Delta \Theta \|_{L^2}^2  +\varepsilon \| \nabla \Delta p \|_{L^2}^2
\end{aligned}
\end{equation}
where we have exploited the same ideas as above along with the fact that $ h'(\Theta)= \tilde{h}'(\Theta)$, \eqref{h'_assump} and the interpolation inequality \eqref{lady}. Further, thanks to the inequality
\begin{equation} 
(a+b)^\nu \leq \max \{1, 2^{\nu }\} (a^\nu +b^\nu), \quad a,b \geq 0, \ \nu>0
\end{equation}
we have 
\begin{equation} \label{inequality_laplacian}
\begin{aligned}
\| \Delta p \|_{H^1}^{\frac{d}{2}}=&\, (\| \Delta p \|_{L^2}+ \| \nabla \Delta p \|_{L^2})^{\frac{d}{2}} \lesssim \| \Delta p \|_{L^2}^{\frac{d}{2}} + \| \nabla \Delta p \|_{L^2}^{\frac{d}{2}}
\end{aligned}
\end{equation}
and by plugging \eqref{inequality_laplacian} into \eqref{ineq_R_13}, we deduce
\begin{equation} \label{R_3_ineq}
\begin{aligned}
 \vert R_{13} \vert \lesssim &\,  \big(1+ \mathcal{E}[\Theta]^{1+ \gamma_1} \big)\big( E_1[p]+ (E_1[p])^{1-\frac{d}{4}}(E_2[p])^{\frac{d}{4}}  \big)\mathcal{D}[\Theta] +\varepsilon \| \nabla \Delta p \|_{L^2}^2.
 \end{aligned}
\end{equation}
Incorporating the bounds \eqref{R_1_ineq}, \eqref{R_2_ineq} and \eqref{R_3_ineq} into \eqref{identity_nabla_delta_p} and taking $\varepsilon$ sufficiently small, it follows that
\begin{equation} \label{estimate_nabla_delta_p}
 \begin{aligned}
 \frac{b}{2} \ddt \| \nabla \Delta p \|^2_{L^2} &\, + \frac{1}{2} \| \sqrt{h(0)} \nabla \Delta p \|^2_{L^2} + \big\| \sqrt{\tilde{h}(\Theta)} \nabla \Delta p \big\|^2_{L^2}\\
 \lesssim  \| \nabla p_{tt} \|^2_{L^2}  + \big( &\,1+\mathcal{E}[\Theta]+ \mathcal{E}[\Theta]^{2+\gamma_2}+ \mathcal{E}[\Theta]^{1+ \gamma_1} \big) \\
 &\, \times \big( E_1[p]+ (E_1[p])^{1-\frac{d}{4}}(E_2[p])^{\frac{d}{4}}  \big)\big(D_1[p]+\mathcal{D}[\Theta] \big).
 \end{aligned}
\end{equation}

In the next step, we test the equation \eqref{pressure_eq_diff_t} by $p_{ttt}$ and we integrate in space, we arrive at the identity 
\begin{equation} \label{identity_p_ttt}
\begin{aligned}
&\,\frac{b}{2} \ddt \| \nabla p_{tt} \|^2_{L^2} + \| p_{ttt}  \|^2_{L^2}\\
=&\, \intO h (0)\Delta p_t p_{ttt} \dx+ \intO \Big(6 k (\Theta)p_t p_{tt}+ 2k' (\Theta)\Theta_t ((p_{t})^2+p p_{tt})+2 k( \Theta) p p_{ttt} \Big) p_{ttt} \dx\\  
 &\, + \intO \Big(\tilde{h}' (\Theta) \Theta_t \Delta p +\tilde{h} (\Theta)\Delta p_t \Big) p_{ttt} \dx\\
 =&\, \intO h (0)\Delta p_t p_{ttt} \dx+ R_{21}+ R_{22}\\
 \leq &\, C(\varepsilon)(h (0))^2 \| \Delta p_t \|_{L^2}^2 + \varepsilon \|    p_{ttt} \|_{L^2}^2+  \vert R_{21} \vert+  \vert R_{22} \vert.
\end{aligned}
\end{equation}
We need upper bounds for the integrals $R_{21}, R_{22}$. For an estimate of $R_{21}$, we begin by applying H\"{o}lder's inequality to get 
\begin{equation}
\begin{aligned}
\vert R_{21} \vert \leq &\,
 \Big(6 \| k (\Theta) \|_{L^\infty} \| p_t \|_{L^4} \| p_{tt} \|_{L^4} + 2 \| k (\Theta) \|_{L^\infty} \| p \|_{L^\infty} \| p_{ttt} \|_{L^2} \\
&\,+ 2 \| k' (\Theta) \|_{L^\infty} \| \Theta_t \|_{L^4} \big(\| p_{t} \|_{L^\infty} \| p_{t} \|_{L^4} + \|p \|_{L^\infty} \| p_{tt} \|_{L^4}  \big)   \Big) \| p_{ttt} \|_{L^2}.
\end{aligned}
\end{equation} 
By taking advantage of the properties of the function $k$ and the embeddings $H^1(\Om) \hookrightarrow L^4(\Om)$, $H^2(\Om) \hookrightarrow L^\infty(\Om)$, we find
\begin{equation}
\begin{aligned}
\vert R_{21} \vert \lesssim&\, C(\varepsilon) \Big( \, k_1^2 \| \nabla p_t \|_{L^2}^2 \| \nabla p_{tt} \|_{L^2}^2 + k_1^2 \| \Delta p \|_{L^2}^2 \| p_{ttt} \|_{L^2}^2 \\
&+( 1+ \| \Delta  \Theta \|_{L^2}^{1+ \gamma_2})^2 \| \nabla \Theta_t \|_{L^2}^2 \big(\| \Delta p_{t} \|_{L^2}^2 \| \nabla p_{t} \|_{L^2}^2 + \|\Delta p \|_{L^2}^2 \| \nabla p_{tt} \|_{L^2}^2  \big) \Big)
 +  \varepsilon \| p_{ttt} \|_{L^2}^2.
\end{aligned}
\end{equation}
This  clearly implies 
\begin{equation} \label{R21_ineq}
\begin{aligned}
\vert R_{21} \vert \lesssim &\, \big(1+  \mathcal{E}[\Theta]+ \mathcal{E}[\Theta]^{2+\gamma_2} \big)E_1[p] \big(D_1[p] + D_2[p]\big) + \varepsilon \| p_{ttt} \|_{L^2}^2.
\end{aligned}
\end{equation} 
We derive a bound for $R_{22}$ by relying on the same tools. Indeed, we have
\begin{equation}
\begin{aligned}
\vert R_{22} \vert \leq &\, \Big( \| \tilde{h}' (\Theta) \|_{L^\infty}  \| \Theta_t \|_{L^4}  \|  \Delta p \|_{L^4} + \| \tilde{h} (\Theta) \|_{L^\infty}  \| \Delta p_t  \|_{L^2} \Big) \| p_{ttt} \|_{L^2},
\end{aligned}
\end{equation}
which combined with \eqref{tilde_h_infty}, the fact that $h'(\Theta) = \tilde{h}' (\Theta)$ and \eqref{h'_assump} yields
\begin{equation}
\begin{aligned}
\vert R_{22} \vert \lesssim &\, C(\varepsilon) ( 1+ \| \Delta  \Theta \|_{L^2}^{1+ \gamma_1})^2 \Big( \| \nabla \Theta_t \|_{L^2}^2  \|  \Delta p \|_{L^4}^2 + \| \Theta \|_{L^\infty}^2 \| \Delta p_t  \|_{L^2}^2 \Big)+ \varepsilon \| p_{ttt} \|_{L^2}^2.
\end{aligned}
\end{equation}
Moreover, an application of the interpolation inequalities \eqref{lady},  \eqref{Agmon} gives
\begin{equation}
\begin{aligned}
\vert R_{22} \vert \lesssim &\, C(\varepsilon)( 1+ \| \Delta  \Theta \|_{L^2}^{2+ 2\gamma_1}) \Big(\| \nabla \Theta_t \|_{L^2}^2  \|  \Delta p \|_{L^2}^{2-\frac{d}{2}} \|  \Delta p \|_{H^1}^{\frac{d}{2}} \\
&\,+ \|  \Theta \|_{L^2}^{2-\frac{d}{2}} \| \Delta  \Theta \|_{L^2}^{\frac{d}{2}} \| \Delta p_t  \|_{L^2}^2  \Big)+ \varepsilon \| p_{ttt} \|_{L^2}^2.
\end{aligned}
\end{equation}
Exploiting the estimate \eqref{inequality_laplacian}, it results 
\begin{equation} \label{R22_ineq}
\begin{aligned}
\vert R_{22} \vert \lesssim &\,\big(1+ \mathcal{E}[\Theta]^{1+ \gamma_1} \big) \big( E_1[p] + (E_1[p])^{1-\frac{d}{4}}(E_2[p])^{\frac{d}{4}} + (\mathcal{E}_0[\Theta])^{1-\frac{d}{4}} (\mathcal{E}[\Theta])^{\frac{d}{4}}  \big)\\
&\times \big( D_1[p]+ \mathcal{D}[\Theta] \big)+ \varepsilon \| p_{ttt} \|_{L^2}^2.
\end{aligned}
\end{equation}
Inserting the estimates \eqref{R21_ineq}, \eqref{R22_ineq} into \eqref{identity_p_ttt}, keeping in mind \eqref{degeneracy}, for small enough $\varepsilon$, we get
\begin{equation} \label{estimate_p_ttt}
\begin{aligned}
&\, \frac{b}{2} \ddt \| \nabla p_{tt} \|^2_{L^2} + \frac{1}{2}\|  p_{ttt}  \|^2_{L^2}\\
  &\,\lesssim   \| \Delta p_t \|_{L^2}^2 +\Big(1+  \mathcal{E}[\Theta]+ \mathcal{E}[\Theta]^{1+ \gamma_1}+ \mathcal{E}[\Theta]^{2+\gamma_2} \Big)\\
 &\, \,\,\times  \Big( E_1[p] + (E_1[p])^{1-\frac{d}{4}}(E_2[p])^{\frac{d}{4}}+(\mathcal{E}_0[\Theta])^{1-\frac{d}{4}} (\mathcal{E}[\Theta])^{\frac{d}{4}} \Big) \big( D_1[p]+ D_2[p]+\mathcal{D}[\Theta]\big) .
\end{aligned}
\end{equation}
In order to obtain an estimate for the energy $E_3[p]$, we next multiply the times-differentiated equation \eqref{pressure_eq_diff_t} by $-\Delta p_{tt}$, integrate in space and use integration by parts to get 
\begin{equation} \label{identity_delta_p_tt}
\begin{aligned}
&\,\frac{1}{2} \ddt \Big( \| \nabla p_{tt} \|_{L^2}^2 +h(0) \| \Delta p_t \|_{L^2}^2 \Big) +b \| \Delta p_{tt} \|_{L^2}^2\\
=&\,- \intO \big(2 k( \Theta) p p_{ttt} + 2k' (\Theta)\Theta_t ((p_{t})^2+p p_{tt})+ 6 k (\Theta)p_t p_{tt} \big) \Delta p_{tt} \dx \\
 &\,- \intO \big(\tilde{h}' (\Theta) \Theta_t \Delta p +\tilde{h} (\Theta)\Delta p_t \big)\Delta p_{tt} \dx\\
 =&\, R_{31} +R_{32}.
\end{aligned}
\end{equation}
First, we estimate  $R_{31}$ as follows: 
\begin{equation}
\begin{aligned}
\vert R_{31} \vert \leq &\,   \| k( \Theta) \|_{L^\infty} \Big(2 \| p \|_{L^\infty} \| p_{ttt} \|_{L^2} +6 \|p_t \|_{L^4} \| p_{tt} \|_{L^4} \Big) \| \Delta p_{tt} \|_{L^2} \\
&\,+ 2 \|k' (\Theta) \|_{L^\infty} \| \Theta_t \|_{L^4} \Big( \| p_{t} \|_{L^\infty} \| p_{t} \|_{L^4} + \|p  \|_{L^\infty} \| p_{tt} \|_{L^4}\Big)\| \Delta p_{tt} \|_{L^2}.
\end{aligned}
\end{equation}
Further, appealing to the embeddings $H^1(\Om) \hookrightarrow L^4(\Om)$, $H^2(\Om) \hookrightarrow L^\infty(\Om)$ and \eqref{properties_k}, we find
\begin{equation}
\begin{aligned}
\vert R_{31} \vert \lesssim &\,  k_1 \Big(\| \Delta p \|_{L^2} \| p_{ttt} \|_{L^2} + \|\nabla p_t \|_{L^2} \| \nabla p_{tt} \|_{L^2} \Big)\| \Delta p_{tt} \|_{L^2} \\
&\,+ ( 1+ \| \Delta  \Theta \|_{L^2}^{1+ \gamma_2}) \| \nabla \Theta_t \|_{L^2} \Big( \| \nabla p_{t} \|_{L^2} \|  \Delta p_{t} \|_{L^2} + \|\Delta p  \|_{L^2} \| \nabla p_{tt} \|_{L^2}\Big)\| \Delta p_{tt} \|_{L^2}.
\end{aligned}
\end{equation}
Applying Young's  inequality then  yields 
\begin{equation} \label{R31_ineq}
\begin{aligned}
\vert R_{31} \vert \lesssim &\,  \| \Delta p \|_{L^2}^2 \| p_{ttt} \|_{L^2}^2 + \|\nabla p_t \|_{L^2}^2 \| \nabla p_{tt} \|_{L^2}^2 \\
&\,+ ( 1+ \| \Delta  \Theta \|_{L^2}^{1+ \gamma_2})^2 \| \nabla \Theta_t \|_{L^2}^2 \Big( \| \nabla p_{t} \|_{L^2}^2 \|  \Delta p_{t} \|_{L^2}^2 + \|\Delta p  \|_{L^2}^2 \| \nabla p_{tt} \|_{L^2}^2 \Big) \\
&\, +\varepsilon\| \Delta p_{tt} \|_{L^2}^2\\
\lesssim &\, \big(1+  \mathcal{E}[\Theta]+ \mathcal{E}[\Theta]^{2+\gamma_2} \big)E_1[p] \big(D_1[p]+D_2[p] \big) +\varepsilon\| \Delta p_{tt} \|_{L^2}^2.
\end{aligned}
\end{equation}
As for the integral $R_{32}$, we have
\begin{equation}
\begin{aligned}
\vert R_{32} \vert \leq &\, \Big( \| \tilde{h}' (\Theta) \|_{L^\infty}  \| \Theta_t \|_{L^4}  \|  \Delta p \|_{L^4}  + \| \tilde{h} (\Theta) \|_{L^\infty}  \| \Delta p_t  \|_{L^2} \Big) \| \Delta p_{tt} \|_{L^2}.
\end{aligned}
\end{equation}
Obviously estimating $R_{32}$ does not differ much from estimating $R_{22}$, which means that we can follow the same strategy to reach the bound  
\begin{equation} \label{R32_ineq}
\begin{aligned}
\vert R_{32} \vert \lesssim &\,\big(1+ \mathcal{E}[\Theta]^{1+ \gamma_1} \big) \big( E_1[p] + (E_1[p])^{1-\frac{d}{4}}(E_2[p])^{\frac{d}{4}} + (\mathcal{E}_0[\Theta])^{1-\frac{d}{4}} (\mathcal{E}[\Theta])^{\frac{d}{4}}  \big)\\
&\times \big( D_1[p]+ \mathcal{D}[\Theta] \big)+ \varepsilon \| \Delta p_{tt} \|_{L^2}^2.
\end{aligned}
\end{equation}
Plugging \eqref{R31_ineq}, \eqref{R32_ineq} into \eqref{identity_delta_p_tt} and choosing $\varepsilon$ small enough, we conclude
 \begin{equation} \label{estimate_delta_p_tt}
\begin{aligned}
\frac{1}{2} &\, \ddt \Big( \| \nabla p_{tt} \|_{L^2}^2 +h(0) \| \Delta p_t \|_{L^2}^2 \Big) +\frac{b}{2} \| \Delta p_{tt} \|_{L^2}^2\\
 \lesssim &\, \big(1+  \mathcal{E}[\Theta]+ \mathcal{E}[\Theta]^{2+\gamma_2} + \mathcal{E}[\Theta]^{1+ \gamma_1} \big)\\
 & \times \big( E_1[p] + (E_1[p])^{1-\frac{d}{4}}(E_2[p])^{\frac{d}{4}}+ (\mathcal{E}_0[\Theta])^{1-\frac{d}{4}} (\mathcal{E}[\Theta])^{\frac{d}{4}} \big) \big(D_1[p]+D_2[p] + \mathcal{D}[\Theta]  \big)  .
\end{aligned}
\end{equation}
Lastly, collecting the estimates \eqref{estimate_nabla_delta_p}, \eqref{estimate_p_ttt} and \eqref{estimate_delta_p_tt}, we obtain
\begin{equation}
\begin{aligned}
 &\ddt E_2[p](t) + D_2[p]\\
\lesssim&\, D_1[p] + \,\big(1+  \mathcal{E}[\Theta]+ \mathcal{E}[\Theta]^{2+\gamma_2} + \mathcal{E}[\Theta]^{1+ \gamma_1} \big)\\
&\, \times \big( E_1[p] + (E_1[p])^{1-\frac{d}{4}}(E_2[p])^{\frac{d}{4}}+ (\mathcal{E}_0[\Theta])^{1-\frac{d}{4}} (\mathcal{E}[\Theta])^{\frac{d}{4}} \big) \big(D_1[p]+D_2[p] + \mathcal{D}[\Theta]  \big)  , 
\end{aligned}
\end{equation}
which after integration in time results in the bound \eqref{third_estimate_p}. This  ends the proof of Lemma \ref{lemma5}.
 \end{proof}
 \section{Global well-posedness and exponential decay}\label{Sec_Global Existence}
 
  The main goal in this section is to prove Theorems \ref{Them_Global_solutions} and \ref{Them_exponential_decay}. 
 To show global  well-posedness for small initial data with smallness imposed only  on a lower-order norm, see e.g., \cite{LasieckaOng, bongarti2021vanishing}, we define the combined lower-order energy, which is the energy norm that we assume to be small: 
 \begin{subequations} 
\begin{equation} \label{low_total_energy_p_theta}
\mathsf{E_{low}}[p, \Theta](t):= E_{1}[p](t) + \mathcal{E}_0[\Theta](t),
\end{equation}
and the higher energy norm, which indicates the regularity needed to prove the main result   
\begin{equation} \label{high_total_energy_p_theta}
\mathsf{E_{high}}[p, \Theta](t):= E_{1}[p](t)+ E_{2}[p](t)+ \mathcal{E}[\Theta](t).
\end{equation}   
The associated dissipation rates are given respectively as 
\begin{equation} \label{total_dissip_p_theta}
\begin{aligned}
\mathsf{D_{low}}[p, \Theta](t)&\,:= D_{1}[p](t) + \mathcal{D}_0[\Theta](t),\vspace{0.2cm}\\
\mathsf{D_{high}}[p, \Theta](t)&\,:= D_{1}[p](t)+ D_{2}[p](t)+ \mathcal{D}[\Theta](t).
\end{aligned}
\end{equation}
\end{subequations}
\subsection{Proof of Theorem \ref{Them_Global_solutions}}\label{Section_Proof_Global}
 The proof follows from the local existence, uniform-in-time a priori estimates, and a  continuity argument. We define 
\begin{equation}
\begin{aligned}
\mathbf{E}_{\mathsf{low}}(t)=&\,\sup_{0\leq s\leq t}\mathsf{E_{low}}[p, \Theta](s),\qquad \mathbf{E}_{\mathsf{high}}(t)=\,\sup_{0\leq s\leq t}\mathsf{E_{high}}[p, \Theta](s) , \\
\mathbf{D}_{\mathsf{low}}(t)=&\, \int_0^t  \mathsf{D_{low}}[p, \Theta](s) \ds,\qquad \mathbf{D}_{\mathsf{high}}(t)=\, \int_0^t  \mathsf{D_{high}}[p, \Theta](s) \ds, \qquad t \geq 0. 
\end{aligned}
\end{equation}
According to what we have proven in the previous section, we can assemble the estimates on $E_{1}[p]$ and $\mathcal{E}_0[\Theta]$ to yield a uniform  bound for $\mathsf{E_{low}}[p, \Theta]$ for all $0 \leq s \leq t$ 
\begin{equation} \label{estimate_combined_E_low}
\begin{aligned}
 \mathsf{E_{low}}[p, \Theta](s) + \int_0^s &\, \mathsf{D_{low}}[p, \Theta](r) \,  \textup{d} r\\
\leq C_1 \Big\{ \mathsf{E_{low}}[p, \Theta](0) + \int_0^s &\,  \Big( 1 +\mathsf{E_{high}}[p, \Theta] + \mathsf{E_{high}}[p, \Theta]^{2+ \gamma_2}+\mathsf{E_{high}}[p, \Theta]^{ 1+ \gamma_1 } \Big)\\
&\times \Big( \mathsf{E_{low}}[p, \Theta] + (\mathsf{E_{low}}[p, \Theta])^{1-\frac{d}{4}} (\mathsf{E_{high}}[p, \Theta])^{\frac{d}{4}}\Big) \mathsf{D_{low}}[p, \Theta](r) \,  \textup{d} r \Big\}, 
\end{aligned}   
\end{equation}
which in turn implies
\begin{equation}\label{low_Estimate}
\begin{aligned}
\mathbf{E}_{\mathsf{low}}(t)+\mathbf{D}_{\mathsf{low}}(t)\leq&\, C_1\mathsf{E_{low}}[p, \Theta](0)\\    
&+ C_1\Big( 1 +\mathbf{E}_{\mathsf{high}}(t) + (\mathbf{E}_{\mathsf{high}}(t))^{2+ \gamma_2}+(\mathbf{E}_{\mathsf{high}}(t))^{ 1+ \gamma_1 } \Big)\\
&\times\, \Big( \mathbf{E}_{\mathsf{low}}(t) + (\mathbf{E}_{\mathsf{low}}(t))^{1-\frac{d}{4}} (\mathbf{E}_{\mathsf{high}}(t))^{\frac{d}{4}}\Big) \mathbf{D}_{\mathsf{low}}(t). 
\end{aligned}
\end{equation} 
In addition, by adding \eqref{main_estimate_energy_theta} and  \eqref{main_est_energy_p}, we get the following estimate for the higher-order energy $\mathsf{E_{high}}[p, \Theta]$ for $0 \leq s \leq t$  
\begin{equation} \label{estimate_combined_E_high}
\begin{aligned}
 \mathsf{E_{high}}[p, \Theta](s) + \int_0^s &\, \mathsf{D_{high}}[p, \Theta](r)\,  \textup{d} r\\
\leq  C_2 \Big\{ \mathsf{E_{high}}[p, \Theta](0)+ &\, \int_0^s  \Big( 1 +\mathsf{E_{high}}[p, \Theta] + \mathsf{E_{high}}[p, \Theta]^{2+ \gamma_2}+\mathsf{E_{high}}[p, \Theta]^{ 1+ \gamma_1 } \Big)\\
&\times \Big( \mathsf{E_{low}}[p, \Theta] + (\mathsf{E_{low}}[p, \Theta])^{1-\frac{d}{4}} (\mathsf{E_{high}}[p, \Theta])^{\frac{d}{4}}\Big) \mathsf{D_{high}}[p, \Theta](r)\,  \textup{d} r \Big\}.
\end{aligned}
\end{equation}
This also means the following bound for $\mathbf{E}_{\mathsf{high}}(t)$ 
\begin{equation}\label{E_high}
\begin{aligned}
\mathbf{E}_{\mathsf{high}}(t)+\mathbf{D}_{\mathsf{high}}(t)\leq&\, C_2\mathsf{E_{high}}[p, \Theta](0)\\
&+C_2\Big( 1 +\mathbf{E}_{\mathsf{high}}(t) + (\mathbf{E}_{\mathsf{high}}(t))^{2+ \gamma_2}+(\mathbf{E}_{\mathsf{high}}(t))^{ 1+ \gamma_1 } \Big)\\
&\times\, \Big( \mathbf{E}_{\mathsf{low}}(t) + (\mathbf{E}_{\mathsf{low}}(t))^{1-\frac{d}{4}} (\mathbf{E}_{\mathsf{high}}(t))^{\frac{d}{4}}\Big) \mathbf{D}_{\mathsf{high}}(t). 
\end{aligned}
\end{equation}
Let us make the following a priori assumption:
\begin{equation}\label{Assumption_Small}
\mathbf{E}_{\mathsf{low}}(t)\leq \eta,\qquad  \mathbf{E}_{\mathsf{high}}(t)\leq M,\qquad 2 \|k(\Theta) p\|_{L^\infty L^\infty}\leq \mathsf{m} <1
\end{equation}
for constants $\eta,\,  M, \mathsf{m}>0$ with $\eta$ and $\mathsf{m}$ small enough. Then, estimate \eqref{low_Estimate} implies 
\begin{equation}\label{low_Estimate_2}
\begin{aligned}
\mathbf{E}_{\mathsf{low}}(t)+\mathbf{D}_{\mathsf{low}}(t)\leq&\, C_1\mathsf{E_{low}}[p, \Theta](0)\\    
&+ C_1( 1 +M + M^{2+ \gamma_2}+M^{ 1+ \gamma_1 } )
 ( \eta + \eta^{1-\frac{d}{4}} M^{\frac{d}{4}}) \mathbf{D}_{\mathsf{low}}(t). 
\end{aligned}
\end{equation}
Similarly, from \eqref{E_high}, we have 
\begin{equation}\label{E_high_2}
\begin{aligned}
\mathbf{E}_{\mathsf{high}}(t)+\mathbf{D}_{\mathsf{high}}(t)\leq&\, C_2\mathsf{E_{high}}[p, \Theta](0)\\
&+C_2( 1 +M + M^{2+ \gamma_2}+M^{ 1+ \gamma_1 } )
 ( \eta + \eta^{1-\frac{d}{4}} M^{\frac{d}{4}}) \mathbf{D}_{\mathsf{high}}(t), 
\end{aligned}
\end{equation}
which gives if $\eta$ and $\mathsf{m}$ are small enough 
\begin{equation}
\mathbf{E}_{\mathsf{low}}(t)+\mathbf{D}_{\mathsf{low}}(t)\leq \, \tilde{C}_1\mathsf{E_{low}}[p, \Theta](0), 
\end{equation}
and 
\begin{equation}
\mathbf{E}_{\mathsf{high}}(t)+\mathbf{D}_{\mathsf{high}}(t)\leq \tilde{C}_2\mathsf{E_{high}}[p, \Theta](0).
\end{equation}
Furthermore, we have by the Sobolev embedding  
\begin{equation}
\|k(\Theta) p\|_{L^\infty L^\infty}\leq k_1\|p\|_{L^\infty L^\infty}\lesssim \mathbf{E}_{\mathsf{low}}(t)\lesssim \mathsf{E_{low}}[p, \Theta](0). 
\end{equation}
Therefore, in the standard way, see e.g. \cite{Tao2006NonlinearDE} and  \cite[Section 4]{LasieckaOng}, as long as $\mathsf{E_{low}}[p, \Theta](0)$ is sufficiently small and $\mathsf{E_{high}}[p, \Theta](0)$ is bounded, say; by $M_0$ with $M_0 \leq \frac{M}{2\tilde{C}_2}$, then the above uniform a priori estimates obtained under the assumption \eqref{Assumption_Small} imply the global existence of the solution. This completes the proof of Theorem \ref{Them_Global_solutions}.

\subsection{Proof of Theorem \ref{Them_exponential_decay}} Let $(p, \Theta)$ be the global solution whose existence is provided by Theorem \ref{Them_Global_solutions}. Going back to \eqref{estimate_combined_E_high} and integrating from $s$ to $t$ in all the estimates leading to \eqref{estimate_combined_E_high}, we have, by taking account of \eqref{Assumption_Small}, the bound
 \begin{equation} \label{estimate_combined_E_high_1}
\begin{aligned}
 \mathsf{E_{high}}[p, \Theta](t) + \int_s^t &\, \mathsf{D_{high}}[p, \Theta](r)\,  \textup{d} r\\
\leq  C_2  \mathsf{E_{high}}[p, \Theta](s)+ &\,  C_2( 1 +M + M^{2+ \gamma_2}+M^{ 1+ \gamma_1 } )( \eta + \eta^{1-\frac{d}{4}} M^{\frac{d}{4}})\int_s^t \mathsf{D_{high}}[p, \Theta](r)\,  \textup{d} r.
\end{aligned}
\end{equation}
We can take $\eta$ sufficiently small so that 
\begin{equation}
2C_2( 1 +M + M^{2+ \gamma_2}+M^{ 1+ \gamma_1 } )( \eta + \eta^{1-\frac{d}{4}} M^{\frac{d}{4}}) \leq 1
\end{equation} 
is satisfied. Thus, the estimate \eqref{estimate_combined_E_high_1} becomes 
\begin{equation} \label{estimate_combined_E_high_2}
\begin{aligned}
 \mathsf{E_{high}}[p, \Theta](t) + \int_s^t &\, \mathsf{D_{high}}[p, \Theta](r)\,  \textup{d} r \lesssim  \mathsf{E_{high}}[p, \Theta](s).
\end{aligned}
\end{equation}   
On the other hand, it is clear from the definition of $\mathcal{E}[\Theta]$ and $\mathcal{D}[\Theta]$ that (cf. \eqref{heat_energy_1}, \eqref{heat_dissip_1} and \eqref{heat_energy_total})
\begin{equation} \label{energy_dissip_theta}
\mathcal{E}[\Theta](t) \lesssim \mathcal{D}[\Theta](t), \qquad t \geq 0.
%\quad  \mathcal{E}_0[\Theta](t) \lesssim \mathcal{D}_0[\Theta](t),  
\end{equation}
Moreover, we have by using Poincar\'{e}'s inequality  and elliptic regularity  
\begin{equation}
\| p_{tt} \|^2_{L^2} \lesssim \| \nabla p_{tt} \|^2_{L^2}, \qquad \quad \| \nabla  p_t \|^2_{L^2} \lesssim \| \Delta p_t \|^2_{L^2},
\end{equation}
then these inequalities together with the last bound in \eqref{Assumption_Small} yield 
\begin{equation} \label{energy_dissip_p}
 E_{1}[p](t)+ E_{2}[p](t) \lesssim D_{1}[p](t)+ D_{2}[p](t), \qquad t \geq 0.
 % E_{\mathsf{low}}[p](t) \lesssim D_{\mathsf{low}}[p](t), \quad
\end{equation} 
Combining \eqref{energy_dissip_theta} and \eqref{energy_dissip_p} and recalling  \eqref{high_total_energy_p_theta} and \eqref{total_dissip_p_theta}, we obtain 
\begin{equation}
%\mathsf{E_{low}}[p, \Theta](t) \lesssim \mathsf{D_{low}}[p, \Theta](t), \quad 
\mathsf{E_{high}}[p, \Theta](t) \lesssim \mathsf{D_{high}}[p, \Theta](t).
\end{equation}
Inserting the above inequality in \eqref{estimate_combined_E_high_2}, it follows 
\begin{equation} \label{}
\begin{aligned}
 %& \mathsf{E_{low}}[p, \Theta](t) + \int_s^t \mathsf{E_{low}}[p, \Theta](r) \ds \leq C_3 \mathsf{E_{low}}[p, \Theta](0);\\
 & \mathsf{E_{high}}[p, \Theta](t) + \int_s^t \mathsf{E_{high}}[p, \Theta](s) \,  \textup{d} r \lesssim \mathsf{E_{high}}[p, \Theta](s), \quad 0 \leq s <t,
\end{aligned}
\end{equation}
thereby we deduce that the energy decays exponentially fast by applying Lemma \ref{Gronwall}.

 %\bibliography{references}{}
%\bibliographystyle{siam}   
\end{document}